\documentclass[reqno]{amsart}
\usepackage{amsfonts,amssymb}
\usepackage{enumerate}

\newcommand{\Z}{\mathbb{Z}}
\newcommand{\N}{\mathbb{N}}
\newcommand{\E}{\mathcal{E}}
\newcommand{\ES}{\mathcal{E}^*}
\newcommand{\es}{E^*}

\newtheorem{thm}{Theorem}[section]
\newtheorem{lem}[thm]{Lemma}

\theoremstyle{definition}

\theoremstyle{remark}


\author{Jing-Jing Huang \and Robert C. Vaughan}
\address{
JJH: Department of Mathematics, McAllister Building,
Pennsylvania State University, University Park, PA 16802-6401,
U.S.A.}
\email{huang@math.psu.edu}
\thanks{}

\address{
RCV: Department of Mathematics, McAllister Building,
Pennsylvania State University, University Park, PA 16802-6401,
U.S.A.}
\email{rvaughan@math.psu.edu}
\thanks{}

\subjclass[2010]{Primary 11D68, Secondary 11D45.}
\begin{document}

\title
{On the Exceptional Set for Binary Egyptian Fractions}

\begin{abstract}
For fixed integer $a\ge3$, we study the binary Diophantine equation $\frac{a}n=\frac1x+\frac1y$ and in particular the number $E_a(N)$ of $n\le N$ for which the equation has no positive integer solutions in $x, y$. The asymptotic formula
$$E_a(N)\sim C(a) \frac{N(\log\log N)^{2^{m-1}-1}}{(\log N)^{1-1/2^m}}$$ 
as $N$ goes to infinity, is established in this article, and this improves the best result in the literature dramatically. The proof depends on a very delicate analysis of the underlying group structure.  
\end{abstract}
\maketitle

\section{Introduction} \label{s1}
\noindent Let $a$ be a fixed positive integer.  We consider the binary Diophantine equation
\begin{equation}
\label{e1.1}
\frac{a}{n}=\frac1x+\frac1y
\end{equation}
and denote by $R(n;a)$ the number of pairs of positive integer solutions $(x,y)$ satisfying the equation \eqref{e1.1}.
A good deal is now known about the average behaviour of $R(n;a)$.  See \cite{CDFHP}, \cite{HV1} and \cite{HV2} for details.  In this paper, we are concerned with the number of $n$ such that the equation \eqref{e1.1} is not soluble in positive integers $x$ and $y$ and to this end we define
$$\E_a=\{n\in \N:R(n;a)=0\}.$$
Clearly both $\E_1$ and $\E_2$ are empty.  When $a\geq 3$ the structure of $\E_a$ is more delicate and of great interest. In this paper, we investigate the asymptotic size of $\E_a$.  Thus we define
$$\E_a(N)=\{n\in \E_a:n\leq N\}$$
and
$$E_a(N)=\#\E_a(N).$$

In 1985, G. Hofmeister and P. Stoll \cite{HS} proved that the set $\E_a$ has asymptotic density 0, and more precisely that
$$E_a(N)\ll_a \frac{N}{(\log N)^{1/\phi(a)}}.$$
For $a=5$ and $a\ge 7$ this bound is far from the truth.  Their method is based on the observation that if the equation \eqref{e1.1} is insoluble, then $n$ is not divisible by any prime of the form $p\equiv -1\pmod a$.  Thus a simple application of Selberg's upper bound sieve gives the stated bound.  However when $a=5$ or $a\ge 7$ the bulk of the $n$ deficient in such prime factors nevertheless have a representation.

The following theorem establishes the precise asymptotic behaviour of $E_a(N)$.
 
\begin{thm}\label{t1.1}
For fixed $a\ge 3$, let $2^{\gamma_0}p_1^{\gamma_1}p_2^{\gamma_2}\cdots p_k^{\gamma_k}$ be the canonical decomposition of $a$ and define $m$ and $\delta$ by
$$2^m||\gcd(\delta,p_1-1, p_2-1, \cdots, p_k-1)$$
and
\[
\delta=
\left\{
\begin{array}{lll}
0, & \text{if} & \gamma_0\le 1,\\
2, & \text{if} & \gamma_0\ge 2.
\end{array}
\right.
\]
Then we have
\[
E_a(N)\sim C(a) \frac{N(\log\log N)^{2^{m-1}-1}}{(\log N)^{1-1/2^m}}
\]
where 
$C(a)$ is a positive constant depending only on $a$.
\end{thm}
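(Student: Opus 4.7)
The plan is to reduce~\eqref{e1.1} to a multiplicative representation problem in $(\Z/a\Z)^*$ and then count via the Selberg--Delange method. Clearing denominators gives $(ax-n)(ay-n)=n^2$, so $R(n;a)>0$ iff $n^2$ has a positive divisor $X$ with both $X\equiv -n\pmod a$ and $n^2/X\equiv -n\pmod a$. The second congruence is automatic when $\gcd(n,a)=1$, and the set of $n\in\E_a$ with $\gcd(n,a)>1$ contributes only a lower-order error, so I focus on the coprime case. Writing $n=\prod_i p_i^{e_i}$ and a divisor as $X=\prod_i p_i^{e_i+g_i}$ with $g_i\in[-e_i,e_i]$, one obtains $n\in\E_a$ iff the symmetric ``box''
\[
B(n):=\Bigl\{\prod_i \bar p_i^{\,g_i}:g_i\in[-e_i,e_i]\cap\Z\Bigr\}\subset(\Z/a\Z)^*
\]
does not contain $-1$.

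The next step carries out the promised delicate group-theoretic analysis. Since $-1$ has order $2$, all relevant obstructions live in the $2$-Sylow subgroup of $(\Z/a\Z)^*$, and $2^m$ is by construction the minimal order of a Dirichlet character $\chi\pmod a$ with $\chi(-1)=-1$ (the case distinction on $\gamma_0$ in the definition of $\delta$ reflects whether such a $\chi$ factors through the $2$-part of $a$ or must come from the odd part). If every prime $p\mid n$ satisfies $\chi(p)=1$, then $\chi(B(n))=\{1\}\not\ni\chi(-1)=-1$ and hence $n\in\E_a$. The primes in $\ker\chi$ have density $1/2^m$ among all primes, so Selberg--Delange applied to this base stratum produces $\sim c\,N/(\log N)^{1-1/2^m}$. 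To obtain the $\log\log$ factor, I then allow $n$ to carry a bounded number of auxiliary prime factors in a non-kernel coset $\chi^{-1}(\zeta)$, where $\zeta$ is a primitive $2^m$-th root of unity. If $p_1,\ldots,p_k$ are such primes, each of exponent $1$, then $\chi(B(n))=\{\zeta^s:|s|\le k\}$, which misses $-1=\zeta^{2^{m-1}}$ precisely when $k\le 2^{m-1}-1$. Each auxiliary prime contributes a Mertens sum $\sum_{p\in\text{coset},\,p\le N}1/p\asymp\log\log N$, and summing over the extremal configuration $k=2^{m-1}-1$ yields the factor $(\log\log N)^{2^{m-1}-1}$.

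The principal obstacle is the combinatorial verification that this enumeration captures \emph{all} the main contributions: one must check that no other prime-configuration -- primes spread across several non-kernel cosets, prime powers with larger exponents, or obstructions from a different minimal-order odd character -- produces a strictly larger asymptotic. This reduces to a family of subset-sum problems in $\Z/2^m$ with bounded weights, together with an interaction analysis among the $\phi(2^m)=2^{m-1}$ distinct generator cosets. Once the structural dust settles, the constant $C(a)$ emerges as an explicit product of Selberg--Delange local factors and combinatorial constants arising from the multiplicity of extremal configurations, and the remaining analytic work is a careful but standard application of Selberg--Delange to multiplicative functions stratified by the residues modulo $a$ of the prime factors of $n$.
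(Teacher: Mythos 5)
Your overall strategy is in essence the paper's: the identity $(ax-n)(ay-n)=n^2$ is equivalent to Lemma \ref{lem2.2}, your odd character $\chi$ of minimal order $2^m$ is dual to the paper's maximal subgroup $H$ with $\overline{-1}\notin H$ (namely $H=\ker\chi$, of index $2^m$), and counting integers whose prime factors lie in $\ker\chi$ together with $2^{m-1}-1$ auxiliary primes in a primitive coset is exactly the content of Lemma \ref{lem3.2} (Delange/Selberg--Delange). However, two points break the argument as written. First, the claim that $n\in\E_a$ with $\gcd(n,a)>1$ contribute only a lower-order error is false: by Lemma \ref{lem2.1}, $E_a(N)=\sum_{d\mid a}\es_{a/d}(N/d)$, and since $m(a/d)\ge m(a)$ with equality frequently attained, proper divisors can contribute the \emph{same} order. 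For example, for $a=9$ the multiples of $3$ in $\E_9$ correspond to $\ES_3(N/3)$, of order $N/(\log N)^{1/2}$, the same order as $\ES_9(N)$; so dropping them changes the asymptotic constant and your claimed asymptotic for $E_a(N)$ would be wrong. The repair is simply to run the coprime analysis for every $a/d$ and sum, as the paper does, but it must be done.

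Second, and more seriously, what you label ``the principal obstacle'' is the actual theorem, and you leave it unproved. Your construction only yields a lower bound: $-1\notin\chi(B(n))$ is \emph{sufficient} for $n\in\ES_a$, but $-1\in\chi(B(n))$ for every odd $\chi$ does not force $-1\in B(n)$, so the matching upper bound --- that, up to a negligible set, every $n\in\ES_a$ has all prime factors in some $\ker\chi$ except for at most $2^{m-1}-1$ of them lying in the two cosets $\chi^{-1}(\zeta^{\pm1})$ with $\zeta$ primitive --- is precisely the delicate structural work of the paper: the $\{-1,0,1\}$-weighted subset-sum lemma in $\Z/2^m\Z$ together with the classification of its extremal sequences (Lemma \ref{lem2.4}), its translation into the statement that $2^{m-1}$ non-kernel prime factors force a divisor ratio $\equiv-1\pmod a$ once the prime factors cover $H$ (Lemmas \ref{lem2.6} and \ref{lem2.9}), the treatment of $n$ whose prime factors occupy too few residue classes to cover any $H$ (Lemma \ref{lem2.8}), and the estimates showing that these residual configurations and the overlaps between different subgroups $H$ and different coset choices are $o$ of the main term (Lemmas \ref{lem4.2} and \ref{lem4.3}). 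Your sketch gestures at ``a family of subset-sum problems'' and asserts that ``once the structural dust settles'' the constant emerges, but without these steps you have established the correct order of magnitude from below and the shape of the answer, not the asymptotic equivalence claimed in the theorem.
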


In order to establish this theorem we need first to investigate the underlying structure of $\E_a$, and we embark on this in Section \ref{s2}.  The case when $a$ is a prime power is somewhat easier to understand and, having established some preliminary lemmata in \ref{s2.1}, we consider this case in Section \ref{s2.2}.  This then leads into a discussion of the general case in \ref{s2.3}. 

In Section \ref{s3} the main analytic input is introduced, and it is convenient to base this on an arithmetical application of a theorem of Delange.  Delange's theorem is a refinement of the Wiener--Ikehara theorem and is qualitative in nature.  In particular it does not give an explicit error term.  By using instead a method allied to that leading to the strongest known unconditional error term in the prime number theorem it would be possible to give a quantitative error term in Lemma \ref{lem3.2} of a similar quality.  However whilst this would be quite routine in nature there would be many detailed complications and more importantly the extra effort would not lead to any further illumination of the central problem of this paper in that a greater loss in the error term appears at a later stage of our argument.  We are happy to leave this approach as an exercise to the reader.  

The proof of the main theorem is completed in Section \ref{s4} through a suitable combination of Sections \ref{s2} and \ref{s3}.    

Throughout this paper, we reserve the letters $p$, $q$ and $r$ for prime numbers and calligraphic letters for sets and sequences. In particular, if $\mathcal{A}\subseteq\N$ we denote by $\mathcal{A}(N)$ the subset of $\mathcal{A}$ with elements less than or equal to $N$ and $|\mathcal{A}(N)|$ denotes the cardinality of $\mathcal{A}(N)$. We also use Vinogradov's ``$\ll$" notation, namely when we write $f(x)\ll g(x)$ we mean $|f(x)|\le C g(x)$ for some absolute constant $C$ and sufficiently large $x$. And accordingly, the notation $\ll_a$ means that the implicit constant defined above depends on some parameter $a$. 

\section{The Structure of $\E_a$} \label{s2}

\subsection{Some elementary lemmata}\label{s2.1}

It is more convenient to work with the notations $\ES_a$, $\ES_a(N)$ and $\es_a(N)$, defined as follows:
$$\ES_a=\{n\in\E_a:(n,a)=1\}$$
and $\ES_a(N)$ and $\es_a(N)$ can be defined accordingly.  Then we have immediately the following.
\begin{lem} \label{lem2.1}
We have
$$\E_a(N)=\bigcup_{d|a}\ES_{a/d}(N/d)$$
and hence have
$$E_a(N)=\sum_{d|a}\es_{a/d}(N/d).$$
\end{lem}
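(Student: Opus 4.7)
The plan is to exhibit a natural bijection between $\E_a(N)$ and the disjoint union $\bigsqcup_{d\mid a} \ES_{a/d}(N/d)$; this immediately gives the cardinality formula, while the set identity is to be read with each $\ES_{a/d}(N/d)$ embedded into $\E_a(N)$ via $m\mapsto dm$.

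To construct the bijection, I would associate to each $n\in\E_a(N)$ the pair $(d,m)$ with $d=\gcd(n,a)$ and $m=n/d$, and write $a'=a/d$, so that $\gcd(m,a')=1$. The crucial observation is the trivial identity
$$\frac{a}{n}=\frac{a'}{m},$$
which shows that the equations $a/n=1/x+1/y$ and $a'/m=1/x+1/y$ have exactly the same solution sets. Thus $n\in\E_a$ if and only if $m\in\ES_{a'}$, and since $m=n/d\le N/d$ we indeed land in $\ES_{a/d}(N/d)$. Conversely, given $d\mid a$ and $m\in\ES_{a/d}(N/d)$, setting $n=dm$ gives $n\le N$, and $\gcd(n,a)=d\cdot\gcd(m,a/d)=d$, so that the inverse map $(d,m)\mapsto dm$ is well defined; the same identity transfers insolubility back, so $n\in\E_a(N)$. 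This establishes the bijection and hence
$$E_a(N)=\sum_{d\mid a}\es_{a/d}(N/d).$$

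There is no substantive obstacle in this argument, only two bookkeeping points to verify carefully: first, that the equivalence of solubility of $a/n=1/x+1/y$ and of $(a/d)/(n/d)=1/x+1/y$ is legitimate (it is, because cancelling a common factor in the left-hand side does not alter the right-hand side); and second, that the decomposition indexed by $d\mid a$ is genuinely disjoint, which is exactly the identity $\gcd(dm,a)=d$ under $\gcd(m,a/d)=1$. Once these are in hand, the union formula and the sum formula follow at once.
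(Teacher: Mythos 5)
Your proof is correct and is exactly the argument the paper has in mind (the paper states the lemma as immediate, the point being that $\gcd(n,a)=d$ partitions $\E_a(N)$ and cancelling $d$ in $a/n=(a/d)/(n/d)$ leaves the solution set of \eqref{e1.1} unchanged, so $n\in\E_a$ iff $n/d\in\ES_{a/d}$). Your explicit checks of disjointness and of $\gcd(dm,a)=d$ are the right bookkeeping and nothing more is needed.
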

Thus the structure of $\E_a$ can be deduced readily from that of $\ES_a$.  Henceforward we assume that $n$ is a positive integer coprime to $a$, unless otherwise stated.  

The starting point of our argument is the following elementary lemma.

\begin{lem}\label{lem2.2}
The equation \eqref{e1.1} with $(a,n)=1$ is soluble in positive integers if and only if there exists a pair of coprime factors $u$ and $v$ of $n$  such that $a|u+v$.
\end{lem}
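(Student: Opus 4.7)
The plan is to argue both implications by clearing denominators in \eqref{e1.1} and exploiting the elementary identity $\gcd(uv,u+v)=1$ whenever $\gcd(u,v)=1$.

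For the forward direction, suppose $(x,y)$ is a positive integer solution to \eqref{e1.1}. Write $d=\gcd(x,y)$ and $x=du$, $y=dv$ with $\gcd(u,v)=1$. Clearing denominators in \eqref{e1.1} gives $aduv=n(u+v)$. Now I would observe that since $\gcd(u,v)=1$, any common prime factor of $u+v$ and $uv$ would divide either $u$ or $v$, contradicting $\gcd(u,v)=1$; hence $\gcd(uv,u+v)=1$. From $uv\mid n(u+v)$ I can then conclude $uv\mid n$, so in particular $u\mid n$ and $v\mid n$, furnishing the required coprime pair of divisors. Writing $n=uvk$ and cancelling in $aduv=uvk(u+v)$ yields $ad=k(u+v)$. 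Since $(a,n)=1$ and $k\mid n$, we have $(a,k)=1$, so $a\mid u+v$, as needed.

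For the reverse direction, suppose $u,v$ are coprime divisors of $n$ with $a\mid u+v$. Since $\gcd(u,v)=1$, we have $uv\mid n$, so I can write $n=uvk$ for a positive integer $k$, and $u+v=aj$ for a positive integer $j$. I would then set
\[
x=kju,\qquad y=kjv
\]
and verify directly:
\[
\frac{1}{x}+\frac{1}{y}=\frac{u+v}{kjuv}=\frac{aj}{kjuv}=\frac{a}{uvk}=\frac{a}{n}.
\]
This exhibits a positive integer solution to \eqref{e1.1}.

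There is no serious obstacle here: the lemma is elementary and the content lies entirely in the observation $\gcd(uv,u+v)=1$, which decouples the divisibility conditions on the two sides of $aduv=n(u+v)$. The hypothesis $(a,n)=1$ is only needed in one place, namely to pass from $a\mid k(u+v)$ to $a\mid u+v$ in the forward direction. The construction $x=kju$, $y=kjv$ in the reverse direction is naturally suggested by running the forward computation in reverse.
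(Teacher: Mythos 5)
Your proof is correct and follows essentially the same route as the paper: clear denominators, factor out $\gcd(x,y)$, use $\gcd(uv,u+v)=1$ together with $(a,n)=1$ for the forward direction, and construct $x=a'n'u$, $y=a'n'v$ (your $kju$, $kjv$) for the converse. The only difference is cosmetic — you extract $a\mid u+v$ via the cancelled equation $ad=k(u+v)$ rather than directly from $a\mid n(u+v)$, which changes nothing of substance.
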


\begin{proof}
If the equation \eqref{e1.1} is soluble, then we rewrite it as $axy=n(x+y)$, let $(x,y)=l$ and write $x=ul$ and $y=vl$ with (u,v)=1. Thus $aluv=n(u+v)$. Then, as $(a,n)=1$ and $(uv,u+v)=1$, we have $uv|n$ and $a|u+v$.

In the opposite direction, we write $u+v=a a'$ and $n=u v n'$, so that  $\frac{a}{n}=\frac1{a' n' u}+\frac1{a' n' v}$.  
\end{proof}

This lemma suggests that the solubility of equation \eqref{e1.1} solely depends on the residue classes of factors of $n$ modulo $a$, and hence depends on the residue classes of prime factors of $n$ modulo $a$, which naturally leads our discussion to the distribution of prime factors of $n$ in the multiplicative group $(\Z/a\Z)^*$.  

\subsection{The case that $a$ is a power of odd prime}\label{s2.2}

We consider the case that $a=p^\gamma$ is a power of odd prime in this subsection and come back to the general case later. This strategy fits with both the motivational purpose and the presentational purpose. Let $G$ denote the cyclic group $(\Z/a\Z)^*$ of reduced residue classes modulo $a=p^\gamma$, and let $H$ be the maximal subgroup of $G=(\Z/a\Z)^*$ with cardinality $|H|$ being odd, namely $H$ is the maximal subgroup of $G$ such that $\overline{-1}\notin H$ and clearly such a group is unique. Here and throughout this article $\overline{i}$ means the residue class $i\pmod{a}$, if there is no ambiguity about the modulus $a$ in the context. Now let $\phi(a)=2^m d$ with $d$ being an odd number. If we fix a primitive root $g$ modulo $a$, then 
\begin{equation}\label{e2.1}
G=\{g, g^2, g^3, \ldots, g^{2^m d}\}
\end{equation}
and
\begin{equation}\label{e2.2}
H=\{g^{2^m}, g^{2\cdot2^m}, g^{3\cdot2^m}, \ldots, g^{d\cdot 2^m}\},
\end{equation}
by which one readily verifies that $\overline{-1}\notin H$ since $g^{\phi(a)/2}\equiv-1\pmod{a}$. Hence we have the index $[G:H]=2^m$ and $|H|=\frac{\phi(a)}{2^m}=d$.

Essentially the structure of $\ES_a$ is that any $n\in \ES_a$ can have arbitrarily many prime factors lying in the residue classes in $H$ but can have at most a bounded number of prime factors lying outside $H$. It is this observation that renders the counting function of $\ES_a$ susceptible to an analytic argument.

\begin{lem}\label{lem2.3} 
We have the following inclusion relation of sets 
$$\{n\in\N:q|n \text{ with } q \text{ being prime } \Rightarrow \overline{q}\in H\}\subseteq \ES_a.$$
\end{lem}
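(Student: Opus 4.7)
The plan is to combine Lemma \ref{lem2.2} (the factorization criterion for solubility) with the defining property of $H$, namely that $\overline{-1}\notin H$. Let $n$ be a positive integer all of whose prime factors $q$ satisfy $\overline{q}\in H$. First I would observe that since $H\subseteq(\Z/a\Z)^*$, every prime factor of $n$ is coprime to $a$, so $(n,a)=1$; this verifies the coprimality condition in the definition of $\ES_a$.

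Next I would assume for contradiction that $n\notin\ES_a$, i.e. the equation \eqref{e1.1} is soluble. By Lemma \ref{lem2.2} there exist coprime divisors $u,v$ of $n$ with $a\mid u+v$. Since $u\mid n$ and $v\mid n$, every prime factor of $u$ and of $v$ is also a prime factor of $n$, hence lies (modulo $a$) in $H$. Because $H$ is a subgroup of $G=(\Z/a\Z)^*$, it is closed under multiplication, so $\overline{u}\in H$ and $\overline{v}\in H$, and in particular $\overline{u}$ is invertible in $G$.

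The congruence $a\mid u+v$ means $\overline{v}\equiv-\overline{u}\pmod a$, so multiplying by $\overline{u}^{-1}\in H$ yields
\[
\overline{-1}=\overline{v}\cdot\overline{u}^{-1}\in H.
\]
This contradicts the definition of $H$ as the maximal subgroup of $G$ not containing $\overline{-1}$. Hence \eqref{e1.1} has no solution, i.e.\ $n\in\E_a$, and combined with $(n,a)=1$ we conclude $n\in\ES_a$.

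There is really no obstacle here: the lemma is a one-line consequence of Lemma \ref{lem2.2} once one uses that $H$ is multiplicatively closed and excludes $\overline{-1}$. The only thing to be careful about is recording both requirements for membership in $\ES_a$, namely insolubility and coprimality to $a$, and noting that a product of elements of $H$ (via prime factorizations of $u$ and $v$) stays in $H$.
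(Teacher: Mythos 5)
Your proof is correct and follows essentially the same route as the paper: apply Lemma \ref{lem2.2} and use that $H$ is a subgroup of $(\Z/a\Z)^*$ closed under multiplication with $\overline{-1}\notin H$, so coprime divisors $u,v$ of $n$ can never satisfy $a\mid u+v$. The only cosmetic difference is that you argue by contradiction via $\overline{-1}=\overline{v}\,\overline{u}^{-1}$ whereas the paper notes directly that $\overline{-v}\notin H$ forces $\overline{u}\neq\overline{-v}$; your explicit check that $(n,a)=1$ is a nice touch but the substance is identical.
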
  
\begin{proof}
For any $n$ on the left hand side, and for any pair of coprime positive integers $u$ and $v$ with $uv|n$ we have $\overline{u}, \overline{v}\in H$ in light of the fact that $H$ is a group. Since $\overline{-1}\notin H$, we have $\overline{-v}\notin H$ and hence $\overline{u}\neq\overline{-v}$, in other words $a\nmid u+v$. Now Lemma \ref{lem2.3} follows from Lemma \ref{lem2.2}. 
\end{proof}

The next lemma is central to our understanding of the structure of $\ES_a$.

\begin{lem}\label{lem2.4}
Let $m\ge1$ and $\mathcal{G}$ denote the additive group $\Z/(2^m\Z)$, let $\{e_j\}_1^t$ be a sequence with $t$ nonzero elements of $G$ (i.e., repeated elements are allowed in $\{e_j\}$), and form the set
$$\mathcal{S}=\left\{\sum_{j=1}^t\delta_j e_j:\delta_j\in\{-1,0,1\}\right\}.$$
\begin{enumerate}[(i)]
\item If $t\ge2^{m-1}$, then $\overline{2^{m-1}}\in\mathcal{S}$. Namely, as long as the length of $\{e_j\}$ exceeds $2^{m-1}$, for whatever choices of the elements $e_j$, one can always find a partial sum, as in the definition of $\mathcal{S}$, such that it is equal to $\overline{2^{m-1}}$.
\item If $t=2^{m-1}-1$, then the corresponding set $\mathcal{S}$ does not contain $\overline{2^{m-1}}$ if and only if the sequence $\{e_j\}_1^t$ satisfies  $e_j\equiv \pm e\pmod{2^m }$ for each $j$ and some fixed $e\in(\Z/(2^m)\Z)^*$. 
\end{enumerate}
\end{lem}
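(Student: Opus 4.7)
The plan is a simultaneous induction on $m$ for parts (i) and (ii), via a pairing reduction from $\mathcal{G}=\Z/2^m\Z$ to $\Z/2^{m-1}\Z$. The base case $m=1$ is immediate: $\Z/2\Z$ has unique nonzero element $1=2^0$, so (i) holds, and (ii) is vacuous since $t=0$. For the inductive step, split the sequence $\{e_j\}$ into its even part $E_e$ and odd part $E_o$. For $j\in E_e$ set $f_j=e_j/2\in\Z/2^{m-1}\Z$, which is nonzero. Pair the indices of $E_o$ arbitrarily; since $2^{m-1}$ is even, any leftover unit is forced to have coefficient $0$, and within each pair $(i,j)$ we restrict to $\delta_i=\pm\delta_j$, so that the pair's contribution lies in $\{0,\pm 2a,\pm 2b\}$ with $a=(e_i+e_j)/2$ and $b=(e_i-e_j)/2$ in $\Z/2^{m-1}\Z$. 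At least one of $a,b$ is nonzero: else $e_i\equiv\pm e_j$ with both signs, giving $2e_i\equiv 0\pmod{2^m}$, impossible as $e_i$ is odd. Dividing the target $\sum\delta_j e_j\equiv 2^{m-1}\pmod{2^m}$ by $2$ reformulates the problem as: after choosing some $c\in\{a,b\}$ for each pair, realise $2^{m-2}$ as a $\{-1,0,1\}$-combination in $\Z/2^{m-1}\Z$ of the $f_j$'s and these $c$'s. The number of such ``groups'' is $|E_e|+\lfloor|E_o|/2\rfloor=t-\lceil|E_o|/2\rceil\geq\lfloor t/2\rfloor$.

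For part (i), $t\geq 2^{m-1}$ gives at least $2^{m-2}$ groups, so (i) for $m-1$ applied with any nonzero choices of $c$ yields $2^{m-2}$ in the reduced sumset, hence $2^{m-1}\in\mathcal{S}$. For the easy direction of (ii), if every $e_j\equiv\pm e\pmod{2^m}$ for a unit $e$, then $\mathcal{S}=\{ke:|k|\leq t\}$, and $ke\equiv 2^{m-1}\pmod{2^m}$ forces $k\equiv 2^{m-1}\pmod{2^m}$ (since $e^{-1}$ is odd and $2^{m-1}\cdot(\text{odd})\equiv 2^{m-1}$), which is incompatible with $|k|\leq 2^{m-1}-1$.

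For the contrapositive of (ii), assume that the $e_j$'s are not all $\pm e$ for a single unit. \emph{Case A:} some $e_{j_1}$ is not a unit. Distinguishing $e_{j_1}$ and letting $s'$ denote the number of odds among the remaining $t-1$ elements, the group count is $1+(t-1-s')+\lfloor s'/2\rfloor=t-\lceil s'/2\rceil\geq 2^{m-2}$ (using $s'\leq 2^{m-1}-2$), so (i) for $m-1$ applies. \emph{Case B:} all $e_j$ are units but some $e_{j_0}\not\equiv\pm e_1\pmod{2^m}$, which forces $m\geq 3$. Pair $(e_1,e_{j_0})$ first: both $a_1,b_1$ are nonzero by hypothesis, and since $a_1+b_1=e_1$ is odd, exactly one of $a_1,b_1$ is even in $\Z/2^{m-1}\Z$. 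Choose that even element as $c_1$; it is nonzero and not a unit. Pair the remaining odds arbitrarily with any nonzero representatives, producing $(t-1)/2=2^{m-2}-1$ nonzero elements of $\Z/2^{m-1}\Z$ that are not all of the form $\pm e$ for a unit $e$ (because $c_1$ is non-unit); then (ii) for $m-1$ delivers $2^{m-2}$ in the reduced sumset, whence $2^{m-1}\in\mathcal{S}$. The delicate point is Case B: the identity $a_1+b_1=e_1$ (odd) together with the hypothesis $e_{j_0}\not\equiv\pm e_1\pmod{2^m}$ is what guarantees the non-unit representative $c_1$, and this is precisely why (ii) must be carried alongside (i) through the induction.
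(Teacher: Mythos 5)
Your proof is correct, but it takes a genuinely different route from the paper's. The paper keeps the original elements and inducts upward on the modulus: for (i) it splits a sequence of length $2^m$ into two blocks of length $2^{m-1}$, applies the inductive hypothesis modulo $2^m$ to each block to obtain partial sums $s_1,s_2\equiv 2^{m-1}\pmod{2^m}$, and then gets $s_1\pm s_2\equiv 2^m\pmod{2^{m+1}}$ by a parity-based choice of sign; for (ii) it runs the same block argument to force the elements to be $\pm e\pmod{2^m}$ (upgrading from one block to all elements by permuting the sequence) and then separates the two lifts $e$ and $e+2^m$ by the $e_1\pm e_2$ trick. You instead perform a pairing-and-halving descent on the sequence itself: even elements become $e_j/2$, each pair of odd elements contributes one nonzero element of $\{(e_i+e_j)/2,(e_i-e_j)/2\}$, and the full statement at level $m-1$ is applied to this new, roughly half-length sequence in $\Z/2^{m-1}\Z$; the sign choice inside each pair plays the role of the paper's $s_1\pm s_2$ step, and your Case B observation that exactly one of $(e_1\pm e_{j_0})/2$ is even, hence a nonzero non-unit, so that part (ii) at level $m-1$ applies, replaces the paper's permutation argument. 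Both are simultaneous inductions on $m$; yours costs a bit more bookkeeping in counting the reduced ``groups'', but in return part (ii) is handled constructively in one sweep, and you explicitly check that every element handed to the inductive hypothesis is nonzero --- a point the paper's block-splitting step passes over silently, since an element nonzero modulo $2^{m+1}$ may vanish modulo $2^m$ (though then $e_j=2^m$ and the conclusion is immediate anyway).
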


\begin{proof}  We note that $\mathcal S$ is a subset of $\mathcal{G}$.  The proof is by induction on $m$.  The initial case $m=1$ is trivial.  Thus we can suppose that $m\ge1$ and that the conclusion is true for $m$. Consider $\mathcal{G}=\Z/(2^{m+1}\Z)$ and a sequence $\{e_j\}_1^{2^{m}}\subseteq\mathcal{G}$. By the induction assumption, we know that there exist $\delta_j\in\{-1,0,1\}$ for $1\le j\le 2^m$ such that
$$s_1:=\sum_{j=1}^{2^{m-1}}\delta_j e_j\equiv 2^{m-1}\pmod{2^m}$$
and 
$$s_2:=\sum_{j=2^{m-1}+1}^{2^m}\delta_j e_j\equiv 2^{m-1}\pmod{2^m}.$$
Choose $u_i$ so that $s_i=2^{m-1}+u_i2^m$ for $i\in\{1,2\}$.  Then by considering separately the cases when the $u_i$ are of the same or differing parity it follows that either $s_1+s_2$ or $s_1-s_2$ is congruent to $2^{m}$ modulo $2^{m+1}$.  This establishes (i).  The proof of (ii) is similar but a little more elaborate.  If there is an $e\in(\Z/(2^m)\Z)^*$ such that $e_j\equiv\pm e\pmod{2^m}$ for every $j$, then regardless of the choice of $\delta_j$ we have $\sum_{j=1}^t\delta_je_j\equiv \pm ue\pmod{2^m}$ where $|u|\le 2^{m-1}-1$.  Thus $\overline{2^{m-1}}\not\in\mathcal S$.  Thus it remains to consider the situation when $\overline{2^{m-1}}$ is not contained in $\mathcal S$.  As before, we argue by induction on $m$.  When $m=1$ we have $t=0$ and $\mathcal S$ is empty so the conclusion is trivial.  When $m=2$ we have $t=1$ and $2^{m-1}=2$, and so $e_1\not\equiv 0$ or $2\pmod 4$ and we are done.  Now suppose that the conclusion holds for a given value of $m\ge2$ and consider the case with $m$ replaced by $m+1$.  That is, we suppose that $\overline{2^m}$ is not contained in $\mathcal S$ and will deduce that there is an $e\in(\Z/(2^{m+1})\Z)^*$ such that each $e_j$ satisfies $e_j\equiv\pm e\pmod{2^{m+1}}$.  We now form the partial sums 
$$s_1:=\sum_{j=1}^{2^{m-1}-1}\delta_j e_j$$
and 
$$s_2:=\sum_{j=2^{m-1}}^{2^m-1}\delta_j e_j.$$
By (i) and the inductive hypothesis if there is no $e$ such that $e_j\equiv\pm e\pmod{2^m}$ for $1\le j\le 2^{m-1}-1$, where $e\in(\Z/(2^m)\Z)^*$, then there is a choice of the $\delta_j$ such that
$$s_2\equiv 2^{m-1}\pmod{2^m}$$
and
$$s_1\equiv 2^{m-1}\pmod{2^m}.$$
Thus if there is no such $e$, then as before one of $s_1\pm s_2\equiv 2^m\pmod{2^{m+1}}$, which we have expressly excluded.  Thus there is such an $e$.  Moreover we can repeat the argument with every permutation of the $e_j$.  Thus we can conclude that there is an $e$ such that $e_j\equiv\pm e\pmod{2^m}$ for $1\le j\le 2^m-1$, where $e\in(\Z/(2^m)\Z)^*$.  In other words
$$e_j\equiv \pm e\text{ or }\pm (e+2^m)\pmod{2^{m+1}}.$$
Now we may conclude that either all the $e_j$ are congruent to $\pm e$ or they are congruent to $\pm (e+2^m)$, because if, say, $e_1\equiv \pm e\pmod{2^{m+1}}$ and $e_2\equiv \pm(e+2^m)\pmod{2^{m+1}}$ then either $e_1+e_2$ or $e_1-e_2$ is $2^m\pmod{2^{m+1}}$, contradicting  $\overline{2^m}\not\in\mathcal S$.
\end{proof}
A weaker version of the lemma in which one replaces the exact lower bound   $2^{m-1}$ of $t$ in part (i) by the crude bound $(2^m-1)(2^{m-1}-1)+1$, would follow by a direct application of the pigeonhole principle. 
An extension of this lemma to general modulus (not necessarily a power of 2) could be formulated and then proved by Kneser's theorem (see chapter 1 in \cite{HR}), which is, however, not of direct relevance to the purpose of this memoir. Nevertheless, it would be of essence if one desires to establish the second order term for the asymptotics in Theorem \ref{t1.1}.

Having established the necessary preliminaries, we are poised to reveal the structure of $\ES_a$ when $a=p^\gamma$ is a power of an odd prime.  

\begin{lem} \label{lem2.6}
Let $\mathcal{P}$ be the sequence of prime factors of $n$, counted with multiplicity. And let $\mathcal{T}$ be the subsequence of prime $r$ in $\mathcal{P}$ with $\overline{r}\notin H$, then denote by $t$ the length of $\mathcal{T}$. Considering the projection map: $\Z\rightarrow\Z/a\Z$,  suppose the image of the sequence $\mathcal{P}$ contains $H$. 
\begin{enumerate}[(i)]
\item
If $t\ge2^{m-1}$, then $n\notin\ES_a$.
\item 
If $t=2^{m-1}-1$, then $n\in\ES_a$ if and only if every prime factor in $\mathcal{T}$ is congruent to $g^{e'}$  modulo $p^\gamma$ for a fixed primitive root $g \pmod{p^\gamma}$, and for some $e'$ such that $e'\equiv \pm e\pmod{2^m}$ with $e$ being a fixed odd number.    
\end{enumerate}
\end{lem}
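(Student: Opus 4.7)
My plan is to reduce the solubility of $\frac{a}{n} = \frac{1}{x} + \frac{1}{y}$ to a sum condition in the quotient group $G/H$ and then invoke Lemma~\ref{lem2.4}. By Lemma~\ref{lem2.2}, $n \notin \ES_a$ iff there exist coprime $u, v$ with $uv \mid n$ and $\bar u \equiv -\bar v \pmod{p^\gamma}$. Since $H$ has odd order $d$ and $[G:H] = 2^m$, the quotient $G/H$ is cyclic of order $2^m$; identifying $G/H$ with $\Z/2^m\Z$ via the primitive root $g$, the element $-1 = g^{2^{m-1}d}$ projects to $2^{m-1}$ (as $d$ is odd). Hence the solubility condition is equivalent to $\pi(\bar u) - \pi(\bar v) \equiv 2^{m-1} \pmod{2^m}$, where $\pi : G \to G/H$ denotes the quotient map.

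For each copy of a prime in $\mathcal{P}$, assign $\delta_j \in \{-1, 0, +1\}$ according to whether it is placed into $v$, unused, or placed into $u$. Then $\pi(\bar u) - \pi(\bar v) = \sum_j \delta_j e_j$ with $e_j = \pi(\bar r_j)$; primes with $\bar r \in H$ project to $0$, so only primes of $\mathcal{T}$ contribute, giving precisely the setup of Lemma~\ref{lem2.4} with $t$ nonzero elements $e_j \in \Z/2^m\Z$. The coprimality constraint forbids mixing $+1$ and $-1$ among copies of the same prime, but such mixings only produce effective coefficients already attainable by unmixed choices, so the achievable set of sums coincides with the set $\mathcal{S}$ of Lemma~\ref{lem2.4}.

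I next argue that achievability of $\overline{2^{m-1}} \in \mathcal{S}$ is equivalent to $n \notin \ES_a$. Unachievability clearly forces $n \in \ES_a$. Conversely, given $\delta_j$'s with $\sum_j \delta_j e_j \equiv 2^{m-1} \pmod{2^m}$, the tentative coprime $u_0, v_0$ built only from $\mathcal{T}$-primes satisfy $\bar u_0 = -\bar v_0 h$ for some $h \in H$. By hypothesis the image of $\mathcal{P}$ in $G$ contains $H$, so there is a prime $r \in \mathcal{P}$ with $\bar r = h^{-1}$; since $\bar r \in H$, $r$ does not lie in $\mathcal{T}$ and hence divides neither $u_0$ nor $v_0$. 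Setting $u := u_0 r$ and $v := v_0$ yields coprime divisors of $n$ with $\bar u = -\bar v$, so $n \notin \ES_a$.

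The two parts of the lemma now follow by applying Lemma~\ref{lem2.4}. For (i), $t \geq 2^{m-1}$ ensures $\overline{2^{m-1}} \in \mathcal{S}$ by Lemma~\ref{lem2.4}(i), so $n \notin \ES_a$. For (ii), with $t = 2^{m-1} - 1$, Lemma~\ref{lem2.4}(ii) characterizes unachievability as: all $e_j \equiv \pm e \pmod{2^m}$ for some fixed $e \in (\Z/2^m\Z)^*$; this translates to every prime $r$ in $\mathcal{T}$ being $\equiv g^{e'} \pmod{p^\gamma}$ with $e' \equiv \pm e \pmod{2^m}$ for a fixed odd $e$. The main obstacle is the $H$-adjustment step in the third paragraph: one must patch the residual $h \in H$ in $\bar u_0 \bar v_0^{-1}$ by a single prime of $\mathcal{P}$ without breaking coprimality with $u_0, v_0$. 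The hypothesis that the image of $\mathcal{P}$ covers $H$, together with $\mathcal{T}$ being disjoint from $H$, makes this adjustment seamless.
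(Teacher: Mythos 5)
Your proof is correct and follows essentially the same route as the paper's: reduce via Lemma \ref{lem2.2} to finding coprime divisors of $n$ with ratio $\equiv -1\pmod a$, apply Lemma \ref{lem2.4} to the exponents of the $\mathcal{T}$-primes (viewed in $G/H\cong\Z/2^m\Z$), and use the hypothesis that the image of $\mathcal{P}$ covers $H$ to absorb the leftover $H$-element by one additional prime factor, which is exactly the paper's choice of the auxiliary prime $q\equiv g^{\frac{d-b}2\cdot 2^m}\pmod a$. Your added bookkeeping about repeated primes and the converse direction only makes explicit what the paper's proof leaves terse.
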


\begin{proof}

Recall that $G$ and $H$ are give by \eqref{e2.1} and \eqref{e2.2} respectively, for a fixed primitive root $g$ modulo $a$. Denote $\mathcal{T}=\{r_j\}_1^t$. Let the sequence $\{e_j\}$ be such that $g^{e_j}\equiv r_j\pmod{a}$. By the assumption $\overline{r_j}\notin H$ we know $e_j\not\equiv 0 \pmod{2^m}$, for $1\le j\le t$. Let $\mathcal{G}=\Z/2^m\Z$. Now $\{e_j\}$ can be viewed as a sequence of nonzero elements in $\mathcal{G}$. 
Clearly we see Lemma \ref{lem2.4} gets into play here. More precisely, 
when $t\ge2^{m-1}$, there exist $\delta_j\in\{-1, 0, 1\}$ such that
$$\sum_{j=1}^t\delta_j e_j\equiv 2^{m-1}\pmod{2^m}.$$
This is equivalent to 
$$\sum_{j=1}^t\delta_j e_j\equiv b 2^{m-1}\pmod{2^m d},$$
for some odd number $b$ such that $1\le b\le d$. Hence
$$\sum_{j=1}^t\delta_j e_j+(d-b)2^{m-1}\equiv  2^{m-1}d\pmod{2^m d}.$$
Translating this using multiplicative language, we know that 
$$g^{\frac{d-b}2\cdot2^m}\prod_{j=1}^t(g^{e_j})^{\delta_j}\equiv  g^{2^{m-1}d}\equiv-1\pmod{a}.$$
By assumption there exists $q\in \mathcal{P}$ such that $q\equiv g^{\frac{d-b}2\cdot2^m}\pmod{a}$. On the other hand, $g^{e_j}\equiv r_j\pmod{a}$ and $q\prod_{j=1}^t r_j|n$. Hence
there exist two coprime divisors $u$ and $v$ of $n$, such that $\frac{u}v\equiv -1\pmod{a}$ namely $u+v\equiv0\pmod{a}$. By Lemma \ref{lem2.2}, we know $n\notin\ES_a$. This proves part (i).

For part (ii), the necessity of the condition follows by exactly the same argument as above,  keeping in mind that Lemma \ref{lem2.4} still plays an important role. Now in order to prove the sufficiency, we just need to reverse the above argument and argue by contradiction. 
(Notice that the condition $\overline{\mathcal{P}}$ contains $H$ is not needed in this direction.)
\end{proof}

Our next task naturally is to extend Lemma \ref{lem2.6} to general modulus. We will see how one can carry the arguments here to the general case only with some mild difficulties in the next subsection.
 
\subsection{The case for general $a$}\label{s2.3}
Now we treat the general case $a=2^{\gamma_0}p_1^{\gamma_1}p_2^{\gamma_2}\cdots p_k^{\gamma_k}$. Of course by Chinese remainder theorem we have the group isomorphism
$$(\Z/a\Z)^*\simeq(\Z/2^{\gamma_0}\Z)^*\times(\Z/p_1^{\gamma_1}\Z)^*\times\cdots
\times(\Z/p_k^{\gamma_k}\Z)^*.$$
As before, we still denote this group by $G$.
Here all the groups $(\Z/p^\gamma\Z)^*$ are cyclic when $p$ is an odd prime, but in general $(\Z/2^{\gamma_0}\Z)^*$ is not except that $\gamma_0\le 2$. For instance, $(\Z/2\Z)^*$ is trivial and $(\Z/4\Z)^*$ is isomorphic to $(\Z/2\Z,+)$. In particular, there is no difference between the cases $\gamma_0=0$ and $\gamma_0=1$ because they exert no influence to $G$. While, when $\gamma_0\ge3$, $(\Z/2^{\gamma_0}\Z)^*$ is a product of 2 cyclic groups with generators $-1\pmod{2^{\gamma_0}}$ and $5\pmod{2^{\gamma_0}}$ respectively, namely
$$(\Z/2^{\gamma_0}\Z)^*\simeq\langle\overline{-1}\rangle\times\langle
\overline{5}\rangle.$$
Apparently $|\langle\overline{-1}\rangle|=2$ and $|\langle\overline{5}\rangle|=2^{\gamma_0-2}$. 

Here, we still want to find a maximal subgroup $H$ of $G$ such that $-1\pmod{a}\notin H$. However, the issue here is that such subgroups of $G$ might not be unique. They can be easily constructed as follows. Let $G_i=(\Z/p_i^{\gamma_i}\Z)^*$, for $0\le i\le k$ and $H_i$ to be the maximal subgroup of $G_i$ such that $-1\pmod{p_i^{\gamma_i}}\notin H_i$. As we remarked before, $H_1, H_2, \cdots, H_k$ are unique but $H_0$ is not in general. In fact, $H_0$ is trivial if $\gamma_0\le2$ and is one of the two subgroups of index 2 in the ambient group $G_0$ if $\gamma_0\ge 3$. Recall our discussion in the cyclic case, hence $[G_i:H_i]=2^{m_i}$ for some positive integer $m_i$ and for all $1\le i\le k$. Moreover 
\[
[G_0:H_0]=\left\{
\begin{array}{lll}
1, & \text{if} & \gamma_0\le 1\\
2, & \text{if} & \gamma_0\ge 2.
\end{array}
\right.
\]
Now choose $m$ such that
\[
m=\left\{
\begin{array}{lll}
\displaystyle\min_{1\le i\le k}m_i, & \text{if} & \gamma_0\le 1\\
&&\\
1, & \text{if} & \gamma_0\ge 2,
\end{array}
\right.
\]
namely
$$2^m||\gcd(\delta,p_1-1, p_2-1, \cdots, p_k-1),$$
where
\[
\delta=
\left\{
\begin{array}{lll}
0, & \text{if} & \gamma_0\le 1,\\
2, & \text{if} & \gamma_0\ge 2.
\end{array}
\right.
\]

By definition, we have $m\ge1$. The subgroup $H$ as described above is one of the following groups with index $[G:H]=2^m$:
$$H_0\times G_1\times\cdots \times G_k, G_0\times H_1\times\cdots \times G_k, \cdots$$
in which we just replace the $i$-th component of $G$ by $H_i$ for $0\le i\le k$. We write $\phi(a)=2^m d$ and hence $|H|=\frac{\phi(a)}{2^m}=d$. Notice that $d$ is not necessarily odd in general.  

It's routine to prove the following lemma (see the proof of Lemma \ref{lem2.3}).

\begin{lem}\label{lem2.7}
We have the following inclusion relation of sets
$$\{n\in\N:q|n \text{ with } q \text{ being prime } \Rightarrow \overline{q}\in H\}\subseteq \ES_a.$$
\end{lem}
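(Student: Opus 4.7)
The plan is to mimic the proof of Lemma~\ref{lem2.3} essentially verbatim, the only additional work being a verification that the subgroup $H$ constructed in this subsection for general modulus $a$ still enjoys the two features of $H$ that were actually used in that earlier argument: (a)~$H$ is a subgroup of $G=(\Z/a\Z)^*$, and (b)~$\overline{-1}\notin H$. Both are immediate from the construction: under the CRT isomorphism, $H$ takes the form $G_0\times\cdots\times H_i\times\cdots\times G_k$ (with a single factor $G_i$ replaced by $H_i$, where $H_0$ is trivial in the cases $\gamma_0\le 2$), hence is visibly a subgroup; and the image of $-1$ has its $i$-th CRT component lying outside $H_i$ by the defining property of $H_i$, so $\overline{-1}\notin H$.

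With (a) and (b) in hand, fix $n$ such that every prime divisor $q$ of $n$ satisfies $\overline{q}\in H$; since $H\subseteq(\Z/a\Z)^*$, this forces $(n,a)=1$, so the hypothesis of Lemma~\ref{lem2.2} is available. Given any pair of coprime divisors $u,v$ of $n$, write each as a product of prime divisors of $n$. Closure of $H$ under multiplication then yields $\overline{u},\overline{v}\in H$. Because $\overline{-1}\notin H$ while $\overline{v}\in H$, we have $\overline{-v}\notin H$ as well (otherwise $\overline{-1}=\overline{-v}\cdot\overline{v}^{-1}$ would lie in the subgroup $H$), and therefore $\overline{u}\ne\overline{-v}$, i.e.\ $a\nmid u+v$.

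Since no pair of coprime divisors $u,v$ of $n$ satisfies $a\mid u+v$, Lemma~\ref{lem2.2} precludes any positive integer solution of \eqref{e1.1}, so $n\in\ES_a$. The only step that might look like an obstacle is the non-uniqueness of $H$ flagged in the text, but this is a non-issue: the argument above uses nothing about $H$ beyond properties (a) and (b), and hence goes through uniformly for \emph{any} of the admissible choices of $H$ of index $2^m$ excluding $\overline{-1}$.
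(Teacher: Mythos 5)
Your proposal is correct and follows exactly the route the paper intends: the paper's own proof of Lemma~\ref{lem2.7} is simply the remark that the argument of Lemma~\ref{lem2.3} carries over, which is precisely what you do, using only that $H$ is a subgroup of $(\Z/a\Z)^*$ with $\overline{-1}\notin H$ together with Lemma~\ref{lem2.2}. Your extra verification of these two properties for the admissible $H$ in the general case (index $2^m$, excluding $\overline{-1}$) is a welcome, if routine, addition.
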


The next lemma is crucial for our arguments.

\begin{lem}\label{lem2.8}
Let $H'$ be a subset of $G$ with cardinality $|H'|\ge d$. And suppose for each $h'\in H'$, there are at least $\phi(a)$ many (counted with multiplicity) prime factors $q$ of $n$ satisfying $q\equiv h'\pmod{a}$. Then $n\notin\ES_a$ unless $H'=H$ for some subgroup $H$ defined above.
\end{lem}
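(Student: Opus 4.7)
The plan is to split into two cases depending on whether $\overline{-1}$ lies in the subgroup $K:=\langle H'\rangle$ of $G$ generated by $H'$, reducing each case to Lemma~\ref{lem2.2}.

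If $\overline{-1}\in K$, then since $G$ is finite abelian I can write $\overline{-1}\equiv\prod_{h'\in H'}(h')^{a_{h'}}\pmod{a}$ with non-negative integer exponents, and by reducing each $a_{h'}$ modulo the order of $h'$ I may assume $a_{h'}<\phi(a)$. The hypothesis provides at least $\phi(a)$ prime factors of $n$ (with multiplicity) in each class $h'\in H'$, so for each $h'$ I can select $a_{h'}$ such prime factors; letting $v$ be the product of all chosen primes across $h'\in H'$ gives $v\mid n$ with $v\equiv\overline{-1}\pmod{a}$. Taking $u=1$ then yields coprime divisors $u,v$ of $n$ with $a\mid u+v$, and Lemma~\ref{lem2.2} forces $n\notin\ES_a$.

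If instead $\overline{-1}\notin K$, then $K$ itself is a subgroup of $G$ that avoids $\overline{-1}$, so $K$ is contained in some maximal such subgroup $\widetilde H$. The structural discussion in Section~\ref{s2.3} shows that any maximal subgroup of $G$ avoiding $\overline{-1}$ has order $d=\phi(a)/2^m$; combined with $H'\subseteq K\subseteq\widetilde H$ and the hypothesis $|H'|\ge d$, the chain of inequalities $d\le|H'|\le|K|\le|\widetilde H|=d$ collapses to equality, forcing $H'=K=\widetilde H$ to be one of the subgroups $H$ of the form described.

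The substantive step is the first case; the main thing to verify is that the representation of $\overline{-1}$ uses each class at most $\phi(a)$ times, which is immediate since the order of each $h'$ divides $\phi(a)$. No combinatorial analogue of Lemma~\ref{lem2.4} is needed here: the hypothesis supplies primes well in excess of what the group-theoretic factorisation of $\overline{-1}$ consumes, in contrast to Lemma~\ref{lem2.6} where one must operate at the combinatorial minimum.
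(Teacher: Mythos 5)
Your argument is correct and is essentially the paper's own proof: the same dichotomy on whether $\overline{-1}\in\langle H'\rangle$, with the first case producing a divisor of $n$ congruent to $-1\pmod a$ (your explicit selection of at most $\phi(a)-1$ prime factors per class is a spelled-out version of the paper's remark that the residue classes of divisors of $n$ contain the whole group generated by $H'$) and Lemma \ref{lem2.2} finishing, and the second case resolved by maximality. The one caveat is your assertion that \emph{every} maximal subgroup of $G$ avoiding $\overline{-1}$ has order exactly $d$ --- when the $m_i$ are unequal such subgroups can be smaller, and the explicit product-form list in Section \ref{s2.3} need not exhaust the order-$d$ ones --- but your chain of inequalities only needs that no subgroup of $G$ avoiding $\overline{-1}$ has order exceeding $d$, which is true and is precisely the fact the paper itself uses tacitly at the same point.
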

\begin{proof}
The proof still relies on Lemma \ref{lem2.2}. Actually, by Lemma \ref{lem2.2}, if we can find a divisor of $n$ which is congruent to $-1\pmod{a}$, then $n\notin \ES_a$. Now our argument goes roughly as follows, the fact that $n$ has sufficiently many primes factors lying in sufficiently many different reduced residue classes in $G$, forces $n$ to have at least one divisor lying in the residue class $-1\pmod{a}$ unless $H'$ is one of the above subgroups of $G$. To make this statement rigorous,  let $H''$ be the set of all the residue classes of divisors of $n$ in $G$. Then $\overline{1}\in H''$ and for any two elements $h_1,h_2\in H'$, we have $h_1^{-1}=h_1^{\phi(a)-1}\in H''$ and $h_1 h_2\in H''$. This means that $H''$ contains the subgroup $\langle H'\rangle$ generated by the elements in $H'$ and in particular this subgroup has cardinality at least $|H'|\ge d$. However our $H$ is maximized such that $\overline{-1}\notin H$, which implies either that $\overline{-1}\in\langle H'\rangle$ and hence $\overline{-1}\in H''$, or that $H'$ itself is a maximal subgroup such that $\overline{-1}\notin H'$. In the former case, we have $n\notin \ES_a$ by Lemma \ref{lem2.2} and in the latter case, we know by Lemma \ref{lem2.7} that $n\in \ES_a$.
\end{proof}

Now we need an analogue of Lemma \ref{lem2.6} for the general case. Here we need to pay special attention to the power of $2$ dividing $a$. When $\gamma_0\ge2$, we have $m=1$, which is sort of the ``worst" case, for $E_a(N)$ is largest possible. 
\begin{lem}\label{lem2.9}
Suppose $H=G_0\times G_1\times \cdots \times H_i \times \cdots \times G_k$ is a subgroup of $G$ defined as above. Let $\mathcal{P}$ be the sequence of prime factors of $n$ (counted with multiplicity). And let $\mathcal{T}$ be the subsequence of prime $r$ in $\mathcal{P}$ with $\overline{r}\notin H$. Then denote by $t$ the length of $\mathcal{T}$. Considering the projection map: $\Z\rightarrow\Z/a\Z$,  suppose the image of the sequence $\mathcal{P}$ contains $H$. 
\begin{enumerate}[(i)]
\item
If $t\ge2^{m-1}$, then $n\notin\ES_a$.
\item 
If $t=2^{m-1}-1$ and $m\ge2$ (in this case, $\gamma_0\le1$ and hence $G_0$ is trivial and in particular our $H_i$ here cannot be $H_0$), then $n\in\ES_a$ if and only if every prime factor in $\mathcal{T}$ is congruent to $g^{e'}$  modulo $p_i^{\gamma_i}$ for a fixed primitive root $g \pmod{p_i^{\gamma_i}}$, and for some $e'$ such that $e'\equiv \pm e\pmod{2^m}$ with $e$ being a fixed odd number. 
\end{enumerate}   
\end{lem}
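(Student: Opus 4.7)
The plan is to reduce the general case to the cyclic component $G_i = (\Z/p_i^{\gamma_i}\Z)^*$ where $H$ modifies $G$, and then to follow the proof of Lemma \ref{lem2.6} almost verbatim.  Because $H$ coincides with $G_j$ on every coordinate $j\ne i$, the Chinese Remainder isomorphism delivers a canonical identification
$$G/H \;\simeq\; G_i/H_i \;\simeq\; \Z/2^m\Z,$$
and every prime in $\mathcal{P}\setminus\mathcal{T}$ (which lies in $H$ by definition) maps to the identity in this quotient.  Fixing a primitive root $g_i$ of $p_i^{\gamma_i}$ when $i\ge1$ (and the analogue for $G_0$ when $i=0$), a short computation using that $d_i:=\phi(p_i^{\gamma_i})/2^m$ is odd shows that $-1$ projects to $2^{m-1}$ in $G/H\simeq\Z/2^m\Z$.

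For part (i), every $r\in\mathcal{T}$ has $\overline{r}\notin H$ and hence projects to a nonzero element $e_r\in\Z/2^m\Z$.  I will apply Lemma \ref{lem2.4}(i) to the sequence $\{e_{r_j}\}_1^t$, which is legal since $t\ge 2^{m-1}$, to produce signs $\delta_j\in\{-1,0,1\}$ with $\sum_j\delta_j e_{r_j}\equiv 2^{m-1}\pmod{2^m}$.  Lifting back gives $\prod_j r_j^{\delta_j}\equiv -h\pmod{a}$ for some $h\in H$, and the hypothesis that the image of $\mathcal{P}$ contains $H$ then furnishes a prime $q\in\mathcal{P}$ with $\overline{q}=h^{-1}$, so that $q\prod_j r_j^{\delta_j}\equiv -1\pmod{a}$.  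Separating positive and negative $\delta_j$ (and cancelling any opposite assignments on repeated appearances of the same prime, which leaves the sum in Lemma \ref{lem2.4} unchanged), I obtain coprime divisors $u=q\prod_{\delta_j=1}r_j$ and $v=\prod_{\delta_j=-1}r_j$ of $n$ with $a\mid u+v$, and Lemma \ref{lem2.2} concludes $n\notin\ES_a$.

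For part (ii), necessity runs along identical lines with Lemma \ref{lem2.4}(ii) replacing \ref{lem2.4}(i): unless the $e_{r_j}$ are all $\pm e$ for a fixed odd $e\in\Z/2^m\Z$, the set $\mathcal{S}$ still contains $2^{m-1}$ even with only $t=2^{m-1}-1$ summands, and the construction above once more produces a representation.  Sufficiency is a short calculation: under the hypothesis, the projection in $G_i/H_i\simeq\Z/2^m\Z$ of any quotient $u/v$ of coprime divisors of $n$ equals $Ue\pmod{2^m}$, where $U$ is the signed sum of exponents of primes from $\mathcal{T}$ in $u/v$ and therefore satisfies $|U|\le t=2^{m-1}-1$.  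Since $e$ is a unit modulo $2^m$, $Ue\equiv 2^{m-1}\pmod{2^m}$ would force $U\equiv 2^{m-1}\pmod{2^m}$, which is impossible in that range.  Hence no coprime divisor pair of $n$ has residues summing to zero modulo $a$, and Lemma \ref{lem2.2} gives $n\in\ES_a$.

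The combinatorial heart of the argument has already been handled inside Lemma \ref{lem2.4}; what remains is only the CRT-bookkeeping that isolates the $i$-th component, the verification that $-1$ projects to $2^{m-1}$ in $G/H$, and the translation of the multiplicative identity $q\prod_j r_j^{\delta_j}\equiv -1\pmod{a}$ into a genuine coprime divisor pair of $n$.  The hypothesis $m\ge 2$ in part (ii) conveniently forces $\gamma_0\le 1$, so $G_0$ is trivial and the distinguished component $G_i$ is a genuine cyclic group with a well-defined primitive root, matching exactly the setup of Lemma \ref{lem2.6}.
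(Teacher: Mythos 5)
Your proposal is correct and follows essentially the same route as the paper: reduce to the distinguished component via $G/H\simeq G_i/H_i\simeq\Z/2^m\Z$, apply Lemma \ref{lem2.4} to the discrete logarithms of the primes in $\mathcal{T}$, use the hypothesis that the image of $\mathcal{P}$ contains $H$ together with the Chinese Remainder Theorem to complete the product to $-1\pmod a$, and conclude with Lemma \ref{lem2.2}; your treatment of coprimality (cancelling opposite signs on repeated primes, $\overline{q}\in H$ so $q$ is distinct from the $r_j$) and your direct counting argument for sufficiency in (ii) are if anything slightly more explicit than the paper's. The only cosmetic blemish is the aside about a ``primitive root'' for $G_0$ when $i=0$ (none exists for $\gamma_0\ge3$), but there $m=1$ and your quotient argument needs only $\overline{-1}\notin H_0$, which is the definition of $H_0$, so nothing is lost.
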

\begin{proof}
Generally speaking the arguments in the proof of Lemma \ref{lem2.6} still work here. Nevertheless, one needs to make some changes accordingly.
It is trivial to verify the conclusions when $m=1$. So without loss of generality we assume $m\ge 2$ hence $1\le i\le k$.


Denote $\mathcal{T}=\{r_j\}_1^t$ and fix a primitive root $g$ modulo $p_i^{\gamma_i}$. Let the sequence $\{e_j\}$ be such that $g^{e_j}\equiv r_j\pmod{p_i^{\gamma_i}}$. By the assumption ${r_j}\pmod{a}\notin H$ namely $r_j\pmod{p_i^{\gamma_i}}\notin H_i$ we know $e_j\not\equiv 0 \pmod{2^m}$, for $1\le j\le t$. Let $\mathcal{G}=\Z/2^m\Z$. Now $\{e_j\}$ can be viewed as a sequence of nonzero elements in $\mathcal{G}$. 
Hence by Lemma \ref{lem2.4},
when $t\ge2^{m-1}$, there exist $\delta_j\in\{-1, 0, 1\}$ such that
$$\sum_{j=1}^t\delta_j e_j\equiv 2^{m-1}\pmod{2^m}.$$
After writing $\phi(p_i^{\gamma_i})=2^m d_i$ with $d_i$ odd.
This is equivalent to 
$$\sum_{j=1}^t\delta_j e_j\equiv b 2^{m-1}\pmod{2^m d_i},$$
for some odd number $b$ such that $1\le b\le d_i$. Hence
$$\sum_{j=1}^t\delta_j e_j+(d_i-b)2^{m-1}\equiv  2^{m-1}d_i\pmod{2^m d_i}.$$
Translating this using multiplicative language, we know that 
$$g^{\frac{d_i-b}2\cdot2^m}\prod_{j=1}^t(g^{e_j})^{\delta_j}\equiv  g^{2^{m-1}d_i}\equiv-1\pmod{p_i^{\gamma_i}}.$$
By assumption there exists $q\in \mathcal{P}$ such that 
\[
\left\{
\begin{array}{l}
\displaystyle q\equiv -\prod_{j=1}^t r_j^{-\delta_j}\pmod{p_j^{\gamma_j}},\quad 1\le j\le k, j\not=i\\
\\
q\equiv g^{\frac{d_i-b}2\cdot2^m} \pmod{p_i^{\gamma_i}}.
\end{array}
\right.
\]
Hence by Chinese remainder theorem, we know
$$q\prod_{j=1}^t r_j^{\delta_j}\equiv -1 \pmod{a}$$
namely there exist two coprime divisors $u$ and $v$ of $n$, such that $\frac{u}v\equiv -1\pmod{a}$ namely $u+v\equiv0\pmod{a}$. Again by Lemma \ref{lem2.2}, we know $n\notin\ES_a$. 

Part (ii) can be proved similarly (see the comment in the proof of Lemma \ref{lem2.6}).
\end{proof}

\section{The Analytic Inputs} \label{s3}
We need the following generalisation of Ikehara's Tauberian Theorem, which is due to Delange (\cite{De}, see also Theorem 7.15 in Tenenbaum \cite{Te}). This extends Ikehara's Theorem to the case of a singularity of mixed type, involving algebraic and logarithmic poles.  As usual we use $\sigma$ to denote the real part of the complex number $s$, and we define $l(s)=\log\frac{1}{s-1}$ for $\sigma>1$ by taking $l(2)=0$ and then defining $l(s)$ by continuous variation along the line segment joining $2$ to $s$.

\begin{lem}[Delange, 1954]\label{lem3.1}
Let $f(s)=\sum_{n=1}^{\infty}a_n n^{-s}$ be a Dirichlet series with non-negative coefficients, converging for $\sigma>1$. Suppose that $f(s)$ is holomorphic at all points of the line $\sigma=1$ other than $s=1$ and that, in the neighborhood of the this point and for $\sigma>1$, we have
$$f(s)=(s-1)^{-\omega-1}\sum_{j=0}^{t}g_j(s)\left(\log\left(\frac1{s-1}
\right)\right)^j+g(s),$$
where $\omega$ is some real number, and the $g_j(s)$ and $g(s)$ are functions holomorphic at $s=1$, the number $g_t(1)$ being non-zero. Then:
\begin{enumerate} [(i)]
\item if $\omega$ is not a negative integer, we have as $x\rightarrow\infty$
$$\sum_{n\le x}a_n\sim\frac{g_t(1)}{\Gamma(\omega+1)}x(\log x)^\omega(\log\log x)^t,$$
\item if $\omega=-m-1$ for a non-negative integer $m$ and if $t\ge1$, we have as $x\rightarrow\infty$
$$\sum_{n\le x}a_n\sim(-1)^m m! t g_t(1) x(\log x)^\omega(\log\log x)^{t-1}.$$
\end{enumerate}
\end{lem}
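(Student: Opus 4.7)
The plan is to follow the Selberg--Delange contour method. Beginning with Perron's formula
$$\sum_{n\le x} a_n = \frac{1}{2\pi i}\int_{c-iT}^{c+iT} f(s)\,\frac{x^s}{s}\,ds + (\text{truncation error}),$$
with $c > 1$, I would deform the contour down to a Hankel-type curve $\mathcal{H}$ that wraps around the singularity at $s = 1$, joined to vertical segments on the line $\sigma = 1$ away from $s = 1$. The hypothesis that $f$ is holomorphic on $\sigma = 1$ apart from $s=1$ makes this deformation legitimate; the vertical segments together with the holomorphic remainder $g(s)$ contribute terms of strictly smaller order than the main asymptotic by a Wiener--Ikehara-type boundary estimate, which is exactly the Tauberian content inherited from Ikehara's original theorem.

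The core computation is the evaluation of the model integrals
$$I_j(x) := \frac{1}{2\pi i}\int_{\mathcal{H}} (s-1)^{-\omega-1}\bigl(\log(1/(s-1))\bigr)^j\, \frac{x^s}{s}\,ds.$$
The substitution $s = 1 + w/\log x$ reduces these to Hankel integrals for the reciprocal Gamma function, using $\frac{1}{2\pi i}\int_{\mathcal{H}_0} w^{-\omega-1} e^w\,dw = 1/\Gamma(\omega+1)$, and after tracking the resulting factors one obtains
$$I_j(x) \sim \frac{x(\log x)^\omega (\log\log x)^j}{\Gamma(\omega+1)}.$$
Expanding $g_j(s) = g_j(1) + O(s - 1)$ near $s = 1$ and summing over $j$, the dominant contribution when $\omega$ is not a negative integer comes from $j = t$, since $g_t(1) \ne 0$ and $(\log\log x)^t$ beats $(\log\log x)^{j}$ for $j < t$. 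This yields conclusion (i).

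When $\omega = -m - 1$ with $m \ge 0$, the factor $1/\Gamma(\omega+1)$ vanishes and the ostensible leading $(\log\log x)^t$ term cancels. The surviving asymptotic of order $(\log\log x)^{t-1}$ must be extracted by expanding
$$\bigl(\log(1/(s-1))\bigr)^t = \bigl(\log\log x - \log(-w)\bigr)^t = (\log\log x)^t - t(\log\log x)^{t-1}\log(-w) + \cdots$$
in the Hankel integral. The second term produces a Hankel contribution computable by differentiating $1/\Gamma(\omega+1)$ with respect to $\omega$ at the pole $\omega = -m-1$; the Laurent expansion $1/\Gamma(\omega+1) = (-1)^m m!\,(\omega+m+1) + O((\omega+m+1)^2)$ then yields the explicit constant $(-1)^m m!\, t\, g_t(1)$, which establishes (ii). The chief difficulty lies in this final stage: one must carefully track sub-leading terms through the cancellation at the Gamma pole, verify that still higher corrections remain of smaller order, and justify interchanging the $\omega$-differentiation with the Hankel contour integration — a delicate asymptotic analysis in both the algebraic exponent and the logarithmic power.
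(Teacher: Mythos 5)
There is a genuine gap, and it is located exactly where you wave it away: the claim that the vertical segments and the Perron truncation error ``contribute terms of strictly smaller order \ldots by a Wiener--Ikehara-type boundary estimate.'' The lemma's hypotheses are only that the coefficients are non-negative, that $f$ converges for $\sigma>1$, and that $f$ is holomorphic at each point of the line $\sigma=1$ other than $s=1$. There is no analytic continuation to the left of $\sigma=1$ in any region of uniform width, and --- crucially --- no growth bound whatsoever on $f(1+it)$ as $|t|\to\infty$. The Selberg--Delange contour deformation you propose needs both: to join a Hankel loop at $s=1$ to vertical segments and to discard the tails you must bound $f$ on (or slightly left of) the line out to height $T$ and control the truncated Perron error, and none of this is available from the stated hypotheses. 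The Tauberian content of Ikehara's theorem is not a ``boundary estimate'' that can be inserted at this point; it is a theorem proved by entirely different means (non-negativity of the $a_n$ plus Fourier/Fej\'er-kernel smoothing, so that only local boundary information on bounded $t$-intervals is ever used), and invoking it to justify the contour estimates is circular. Indeed the paper itself flags the issue: it quotes this lemma from Delange (see also Tenenbaum, Theorem 7.15) without proof and remarks that the result is qualitative precisely because no error term is produced; if your contour argument were legitimate under these hypotheses it would automatically yield an explicit error term, which is known to require extra input (continuation into a zero-free-region-type domain with growth bounds, as in the genuine Selberg--Delange method).

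Your local analysis at the singularity is the right heuristic and the constants do come out correctly: the substitution $s=1+w/\log x$, the Hankel formula $\frac{1}{2\pi i}\int w^{-\omega-1}e^w\,dw=1/\Gamma(\omega+1)$, and in case (ii) the expansion $\log\frac{1}{s-1}=\log\log x-\log w$ together with $(1/\Gamma)'(-m)=(-1)^m m!$ do produce the factors $g_t(1)/\Gamma(\omega+1)$ and $(-1)^m m!\,t\,g_t(1)$ of parts (i) and (ii). But as a proof of the lemma as stated, the argument does not go through; to make it rigorous you must either strengthen the hypotheses (continuation and polynomial growth in a region containing $\sigma\ge1$, which proves a different, quantitative theorem) or abandon contour integration and argue as Delange does, Tauberian-style, from the boundary behaviour and the non-negativity of the coefficients.
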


The following lemma is the key analytic ingredient of this paper. Essentially it plays the role of a sieve, but the upshot is that it produces asymptotics, not just an upper bound as almost all sieves do.
\begin{lem}\label{lem3.2}
Suppose $a$ is a positive integer, and let $\mathcal{B}=\{\overline{b_1},\ldots,\overline{b_w}\}$ be a subset of $(\Z/a\Z)^*$ with $w\ge 0$ elements, $\mathcal{C}=\{\overline{c_j}\}_1^t$ be a sequence of length $t$ with elements in $(\Z/a\Z)^*$ (elements could be repeated). And suppose further that $\mathcal{B}$ and $\mathcal{C}$ do not share common elements. Now let $\mathbb P$ denote the set of primes and define
$$\mathcal{A}=\mathcal{A}(\mathcal{B},\mathcal{C})=\{q_1q_2\ldots q_l r_1 r_2\ldots r_t:q_i\in\mathbb P, r_j\in\mathbb P,\overline{q_i}\in\mathcal{B},  \overline{r_j}=\overline{c_j}, l\ge0\}.$$
Then
\begin{enumerate}[(i)]
\item if $w\ge1$, we have as $x\to\infty$ 
$$|\mathcal{A}(x)|\sim C(a, \mathcal{B}, t) \frac{x(\log\log x)^t}{(\log x)^{1-w/\phi(a)}},$$
\item if $w=0$ and $t\ge 1$, we have as $x\to\infty$
$$|\mathcal{A}(x)|\sim C(a, t) \frac{x(\log\log x)^{t-1}}{\log x}.$$
\end{enumerate}
The constants $C(a,\mathcal{B},t)$ and $C(a,t)$ are positive and do not depend on the choices of the $\overline{c_j}$.
\end{lem}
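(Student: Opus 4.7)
The plan is to read off $|\mathcal{A}(x)|$ from the Dirichlet series $F(s)=\sum_{n\in\mathcal{A}} n^{-s}$ by means of Delange's theorem (Lemma \ref{lem3.1}). The core task is to show that, near $s=1$, $F(s)$ has exactly the mixed algebraic--logarithmic shape
$$F(s) = (s-1)^{-w/\phi(a)}\sum_{k=0}^{t} G_k(s)\left(\log\frac{1}{s-1}\right)^{k} + R(s)$$
hypothesised in Lemma \ref{lem3.1}, with $G_k$ and $R$ holomorphic at $s=1$ and $G_t(1)\neq 0$.

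First I would factor $F(s)$ according to the multiplicative structure. Every $n\in\mathcal{A}$ decomposes uniquely as $n=MK$, with $M$ supported on primes in the $\mathcal{B}$-classes and $K$ supported on primes in the $\mathcal{C}$-classes, the two parts sharing no primes since $\mathcal{B}\cap\mathcal{C}=\emptyset$. Grouping $\mathcal{C}$ by its distinct residues $\overline{d}_1,\ldots,\overline{d}_\nu$ with multiplicities $t_1,\ldots,t_\nu$ (so $t_1+\cdots+t_\nu=t$), this gives
$$F(s) = \prod_{\overline{b}\in\mathcal{B}} Q_b(s) \cdot \prod_{i=1}^{\nu} H_{d_i,t_i}(s),$$
where $Q_b(s)=\prod_{p\equiv b\,(\bmod\,a)}(1-p^{-s})^{-1}$ and $H_{d,t}(s)$ is the Dirichlet series of integers having exactly $t$ prime factors (counted with multiplicity) all congruent to $\overline{d}$. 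A short calculation gives $H_{d,t}(s)=\frac{1}{t!}P_d(s)^t+\widetilde{H}_{d,t}(s)$, where $P_c(s)=\sum_{p\equiv c\,(\bmod\,a)}p^{-s}$ and $\widetilde{H}_{d,t}(s)$, coming from the prime-power tails, is holomorphic on $\sigma>1/2$.

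The singularity analysis then rests on the classical character-sum machinery. Orthogonality of the Dirichlet characters modulo $a$, the non-vanishing of $L(s,\chi)$ on $\sigma=1$, and $\log\zeta(s)=\log\frac{1}{s-1}+O(1)$ near $s=1$ together yield
$$P_c(s)=\frac{1}{\phi(a)}\log\frac{1}{s-1}+g_c(s)\qquad\text{and}\qquad Q_b(s)=(s-1)^{-1/\phi(a)}H_b(s),$$
with $g_c$ and $H_b$ holomorphic in a neighbourhood of $\sigma\ge 1$ and $H_b(1)\neq 0$. Substituting these into the product above and expanding produces the desired representation of $F(s)$, with leading coefficient $G_t(1)=\phi(a)^{-t}(t_1!\cdots t_\nu!)^{-1}\prod_{\overline{b}\in\mathcal{B}}H_b(1)\neq 0$; holomorphy of $F$ on the remainder of the line $\sigma=1$ comes again from the non-vanishing of the $L(s,\chi)$ there.

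At this point Delange's theorem applies directly. For (i), $w\ge 1$ gives $\omega:=w/\phi(a)-1\in(-1,0]$, which is never a negative integer, so Lemma \ref{lem3.1}(i) yields the stated asymptotic with constant $G_t(1)/\Gamma(w/\phi(a))$. For (ii), $w=0$ removes the algebraic singularity, so $\omega=-1$ (i.e.\ $m=0$) and $t\ge 1$, and Lemma \ref{lem3.1}(ii) delivers $|\mathcal{A}(x)|\sim t\,G_t(1)\,x(\log\log x)^{t-1}/\log x$. The one delicate step is the passage from the formal multiplicative factorisation to a Dirichlet series whose coefficients equal the indicator $\mathbf{1}_{n\in\mathcal{A}}$ rather than the number of factorisations $q_1\cdots q_l r_1\cdots r_t=n$: one must verify that the discrepancies (from reorderings of primes within a repeated $\mathcal{C}$-class and from coincidences of primes) are absorbed into the holomorphic correction terms $\widetilde{H}_{d,t}$ and $R$, and hence contribute only to lower orders that are negligible against the leading $x(\log x)^{w/\phi(a)-1}(\log\log x)^t$.
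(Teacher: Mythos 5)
Your proposal is correct and follows essentially the same route as the paper: write the counting problem as a Dirichlet series, use orthogonality of Dirichlet characters together with the non-vanishing of $L(s,\chi)$ on $\sigma=1$ to exhibit a singularity of the shape $(s-1)^{-w/\phi(a)}\bigl(\log\frac{1}{s-1}\bigr)^{t}$ at $s=1$ with non-vanishing top coefficient, and then apply Delange's theorem (Lemma \ref{lem3.1}), part (i) for $w\ge1$ and part (ii) for $w=0$, $t\ge1$. Two minor remarks: for $t\ge3$ your correction term $\widetilde{H}_{d,t}$ is not in fact holomorphic at $s=1$ (it contains terms such as $P_d(s)P_d(2s)$, hence lower powers of $\log\frac{1}{s-1}$), but these only feed into the coefficients $G_k$ with $k<t$ and so do not affect the application of Lemma \ref{lem3.1}; and your factor $(t_1!\cdots t_\nu!)^{-1}$ in $G_t(1)$, which arises because you treat the genuine indicator series rather than the ordered-representation series in \eqref{e3.1}, shows that the constant for the set count does depend on the multiplicity pattern of $\mathcal{C}$ (though not on the values of the $\overline{c_j}$), a refinement of the paper's closing claim that is harmless for the application in Section \ref{s4}.
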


\begin{proof}  Let
\[
a_n=\left\{
\begin{array}{cc}
1,& n\in \mathcal{A},\\
0,& n\notin \mathcal{A}.
\end{array}
\right.
\]
The set $\mathcal{A}(\mathcal{B},\mathcal{C})$ has a multiplicative structure, and this leads naturally to the following Dirichlet series

\begin{equation}\label{e3.1}
f(s)=\sum_{n=1}^{\infty}\frac{a_n}{n^s}=\prod_{\substack{q\in\mathbb{P} \\\overline{q}\in\mathcal{B}}}(1-1/q^s)^{-1}\prod_{j=1}^{t}\sum_{\substack{r\in\mathbb{P}\\\overline{r}=\overline{c_j}}}\frac1{r^s}
\end{equation}
which converges absolutely and locally uniformly in the region $\sigma>1$.

When $D(s)$ is a Dirichlet series which converges absolutely and locally uniformly for $\sigma>\sigma_0$, has an analytic continuation for $\sigma>\sigma_1$, is non-zero for $\sigma>\sigma_2$ and satisfies $\lim_{\sigma\rightarrow\infty} D(\sigma)=1$, we define $D(s)^{\alpha}$ for $\sigma>\max(\sigma_1,\sigma_2)$ and an arbitrary complex number $\alpha$ by $\exp(\alpha\log D(s))$ where we choose the principal value of $\log D(\sigma_3)$ for some suitably large $\sigma_3$ and then define $\log D(s)$ by continuous variation from $\sigma_3$ to $s$.

Let $e(\chi)=\frac1{\phi(a)}\sum_{\chi\bmod a}\chi(q)\bar\chi(b)$.  Then, by the orthogonality of Dirichlet characters, the product over $q$ on the right of the equation \eqref{e3.1} is
\begin{align*}
&\prod_{\overline{b}\in\mathcal{B}}\prod_{q}(1-1/q^s)^{-e(\chi)}\\
=&\prod_{\overline{b}\in\mathcal{B}}\prod_{\chi\bmod a}\left(L(s,\chi) g_1(s,\chi)\right)^{\frac{\bar\chi(b)}{\phi(a)}}
\end{align*}
where 
$$g_1(s,\chi)=\prod_q\frac{(1-\chi(q)/q^s)}{(1-1/q^s)^{\chi(q)}},$$
which converges absolutely when $\sigma>\frac12$, and hence has no zeros in that region. Thus $g_1(s,\chi)^{\frac{\bar\chi(b)}{\phi(a)}}$ is a well defined analytic function when $\sigma>\frac12$. 

Now the above product can be further rearranged as
\begin{equation}\label{e3.2}
L(s,\chi_0)^{\frac{\omega}{\phi(a)}}g_1(s)
\end{equation}
where $\omega$ is the cardinality of $\mathcal{B}$, $\chi_0$ is the principal character modulo $a$ and 
$$g_1(s)=\prod_{\overline{b}\in\mathcal{B}}\prod_{\substack{\chi\neq\chi_0 \\\mod a}}(L(s,\chi) g_1(s,\chi))^{\frac{\bar\chi(b)}{\phi(a)}}.$$
In particular 
$$g_1(1)\not=0.$$
Note that $g_1(1)$ may depend on the choice of $\mathcal{B}$.

It is well known that $L(s,\chi)$ has no zeros with $\sigma\ge1$ and has an analytic continuation to the whole complex plane.  Moreover, when $\chi$ is non-principal it is entire and when $\chi$ is a principal character $\chi_0$ it has a simple pole at $s=1$  and $(s-1)L(s,\chi_0)$ is entire.  Thus, when $\chi$ is non-principal ,
$$L(s,\chi)^{\frac{\bar\chi(b)}{\phi(a)}}$$
is analytic in the region $\sigma\ge1$ and hence so is $g_1(s)$.   

On the other hand, again by the orthogonality of Dirichlet characters the sum over $r$ on the right of the equation \eqref{e3.1} is
$$\frac1{\phi(a)}\sum_{\chi\bmod a}\bar\chi(-c_j)\sum_{p}\frac{\chi(p)}{p^s}.$$

Now it is readily verified that when $\sigma>1$ we have
\begin{align*}
\log L(s,\chi)&=-\sum_p\log\left(1-\frac{\chi(p)}{p^s}\right)\\
&=\sum_p\frac{\chi(p)}{p^s}+\sum_p\sum_{k=2}^{\infty}\frac{\chi(p^k)}{p^{k s}}.
\end{align*}
The second sum on the right converges locally uniformly when $\sigma>\frac12$.  Thus
$$\sum_p\frac{\chi(p)}{p^s} = \log L(s,\chi) +h(s,\chi)$$
where $h(s,\chi)$ is holomorphic for $\sigma>\frac12$.  Notice that $\log L(s,\chi)$ is analytic on the line $\sigma=1$ except when $\chi=\chi_0$ when it has a logarithmic singularity at the point $s=1$. Hence
$$\sum_{\substack{
p\\ 
\overline{p}=\overline{c_j}
}}\frac1{p^s} = \frac1{\phi(a)}\log L(s,\chi_0) + h(s,c_j)$$
where $h(s,c_j)$ is an analytic function of $s$ for $\sigma\ge 1$.  Therefore
\begin{equation}\label{e3.3}
\prod_{j=1}^{t}\sum_{\substack{
p\\ 
\overline{p}=\overline{c_j}
}}\frac1{p^s}=\frac1{\phi(a)^t}\sum_{j=0}^t(\log L(s,\chi_0))^j h_j(s)
\end{equation}   
where the $h_j(s)$ are analytic when $\sigma\ge1$ and $h_t(1)=1$.

Now on combining \eqref{e3.1}, \eqref{e3.2} and \eqref{e3.3}, we have
$$f(s)=\frac{g_1(s)}{\phi(a)^t}L(s,\chi_0)^{\frac{w}{\phi(a)}}\sum_{j=0}^t(\log L(s,\chi_0))^jh_j(s).$$
We have $L(s,\chi_0) = \zeta(s)\prod_{p|a}(1-p^{-s})$ and the Riemann zeta function $\zeta(s)$ has a simple pole at $s=1$ with residue 1 at $s=1$.  Thus
$$L(s,\chi_0)=\frac{\phi(a)g_2(s)}{a(s-1)},$$
where $g_2(s)$ is an entire function with $g_2(1)=1$.
On plugging this in to the above expression for $f(s)$, the asymptotic formula of Lemma \ref{lem3.2} follows from Lemma \ref{lem3.1}. Notice that we apply part (i) of Lemma \ref{lem3.1} when $w\ge1$ and part (ii) when $w=0$ and $t\ge1$. That the constants $C(a,\mathcal{B},t)$ and $C(a,t)$ are positive follows by observing first that, by Lemma \ref{lem3.1}, they are non-zero and then that the left hand side of the asymptotic formula is non-negative.
\end{proof}

\section{Proof of Theorem \ref{t1.1}} \label{s4}
\noindent The main analytic tool in the proof of Theorem \ref{t1.1} is Lemma \ref{lem3.2} and we will apply it to the various sets from Section \ref{s2.3}. Recall the definitions of the groups $G$ and $H$ and of the numbers $m$ and $d$ from Section \ref{s2.3}. We denote by $\mathfrak{H}$ the set of all subgroups $H$ defined in Section \ref{s2.3} for general $a$. Now as was defined in Lemma \ref{lem3.2}, we form the set 
$$\mathcal{A}(H,\mathcal{C}),$$
where
$$H=G_0\times G_1\times \cdots \times H_i \times \cdots \times G_k\in\mathfrak{H}$$ 
and $\mathcal{C}=\{\overline{c_j}\}_1^{t}$ is a sequence of length $t=2^{m-1}-1$ with elements in $G$. Moreover for a fixed primitive root  $g\pmod{p_i^{\gamma_i}}$ and a fixed odd number $e$ we have $c_j\equiv g^{e'}\pmod{p_i^{\gamma_i}}$ for some $e'$ with $e'\equiv \pm e\pmod{2^m}$.  For a fixed $H\in\mathfrak{H}$, there are only finitely many possibilities ($d^t 2^{t+m-1}$ actually) for $\mathcal{C}$. Lemma \ref{lem3.2} immediately implies that
\begin{lem}\label{lem4.1}
$$|\mathcal{A}(H,\mathcal{C})(N)|\sim C(H,\mathcal{C})\frac{N(\log\log N)^{2^{m-1}-1}}{(\log N)^{1-1/2^m}}.$$
\end{lem}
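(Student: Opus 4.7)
The set $\mathcal{A}(H,\mathcal{C})$ is built exactly according to the template of Lemma \ref{lem3.2}, with $\mathcal{B}=H$ playing the role of the admissible residue classes for the ``free'' prime factors and $\mathcal{C}=\{\overline{c_j}\}_1^{t}$ playing the role of the prescribed residue classes for the $t=2^{m-1}-1$ distinguished primes. My plan is therefore simply to check that the hypotheses of Lemma \ref{lem3.2}(i) are met and to read off the asymptotic.

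First I would verify the disjointness hypothesis: $\mathcal{B}\cap\mathcal{C}=\emptyset$. By construction, each $\overline{c_j}$ satisfies $c_j\equiv g^{e'}\pmod{p_i^{\gamma_i}}$ with $e'$ odd (since $e'\equiv\pm e\pmod{2^m}$ and $e$ is odd), so the image of $c_j$ in $G_i=(\Z/p_i^{\gamma_i}\Z)^*$ does not lie in $H_i=\langle g^{2^{m_i}}\rangle$, the unique index-$2^{m_i}$ subgroup. Since $H$ differs from $G$ only in the $i$-th factor, this forces $\overline{c_j}\notin H$, as required.

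Next I would read off the parameters. Here $w=|\mathcal{B}|=|H|=d=\phi(a)/2^m\ge1$, so we are in case (i) of Lemma \ref{lem3.2}, and
\[
1-\frac{w}{\phi(a)}=1-\frac{1}{2^m},
\]
while the length of $\mathcal{C}$ is $t=2^{m-1}-1$. Substituting these values directly into the asymptotic of Lemma \ref{lem3.2}(i) gives
\[
|\mathcal{A}(H,\mathcal{C})(N)|\sim C(a,H,t)\,\frac{N(\log\log N)^{2^{m-1}-1}}{(\log N)^{1-1/2^m}},
\]
which is the claimed formula after renaming the constant $C(H,\mathcal{C})$ (noting that by Lemma \ref{lem3.2} the constant depends on $a$, on $H=\mathcal{B}$, and on $t$, but not on the particular choice of the $\overline{c_j}$; however the identification of $\mathcal{C}$ determines $t$, so writing $C(H,\mathcal{C})$ is consistent).

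I do not expect any serious obstacle: the whole content of Lemma \ref{lem4.1} is the bookkeeping that matches the combinatorial setup of Section \ref{s2.3} to the analytic input of Section \ref{s3}. The only point requiring any thought is the boundary case $m=1$, when $t=0$ and $\mathcal{C}$ is empty; here Lemma \ref{lem3.2}(i) still applies (it does not require $t\ge1$, only $w\ge1$), and yields the correct power $(\log N)^{-1/2}$ with no $\log\log N$ factor, in agreement with the stated formula.
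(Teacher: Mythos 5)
Your proposal is correct and matches the paper's own proof, which simply invokes Lemma \ref{lem3.2}(i) with $\mathcal{B}=H$ (so $w=d=\phi(a)/2^m$) and $t=2^{m-1}-1$; your additional checks (disjointness of $H$ and $\mathcal{C}$ via the oddness of $e'$, and the degenerate case $m=1$, $t=0$) are exactly the routine verifications the paper leaves implicit.
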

Now we need to show that the intersection of any two distinct such sets, $\mathcal{A}(H^1,\mathcal{C}^1)$ and $\mathcal{A}(H^2,\mathcal{C}^2)$, is a relatively small set. 

\begin{lem}\label{lem4.2}
We have
$$|\left(\mathcal{A}(H^1,\mathcal{C}^1)\cap\mathcal{A}(H^2,\mathcal{C}^2)
\right)(N)|\ll_a \frac{N(\log\log N)^{2^{m}-2}}{(\log N)^{1-1/4^m}}$$
\end{lem}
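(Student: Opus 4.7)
The plan is to analyse the shape of any $n$ in the intersection, decompose it into a core with prime factors in $H^1\cap H^2$ and a bounded ``bad'' part, and then apply Lemma \ref{lem3.2} to each of the finitely many possibilities for the bad part. If $H^1=H^2$ but $\mathcal{C}^1\neq\mathcal{C}^2$, then any $n$ in the intersection would have the multiset of its prime factors outside $H^1$ equal to both $\mathcal{C}^1$ and $\mathcal{C}^2$, forcing them to agree; so I may assume $H^1\neq H^2$. For any $n\in \mathcal{A}(H^1,\mathcal{C}^1)\cap\mathcal{A}(H^2,\mathcal{C}^2)$, exactly $t=2^{m-1}-1$ prime factors of $n$ (with multiplicity) lie outside $H^1$, in the prescribed classes of $\mathcal{C}^1$, and likewise for $H^2$. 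Every prime factor outside $H^1\cap H^2$ is therefore counted by $\mathcal{C}^1$, by $\mathcal{C}^2$, or by both, so the number of such ``bad'' primes is at most $2t=2^m-2$, and each sits in one of the finitely many classes of $\mathcal{C}^1\cup\mathcal{C}^2$. Hence only $O_a(1)$ bad-class multisets $\mathcal{C}$ can occur, and the intersection is contained in $\bigcup_{\mathcal{C}}\mathcal{A}(H^1\cap H^2,\mathcal{C})$.

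The hard step is the exact index identity $[G:H^1\cap H^2]=4^m$. By construction each $H^j$ arises from Section \ref{s2.3} by replacing exactly one factor of the CRT decomposition of $G$ by some $H_{i_j}$ with $[G_{i_j}:H_{i_j}]=2^m$. If $i_1\neq i_2$ then $H^1\cap H^2$ replaces both factors and $[G:H^1\cap H^2]=2^m\cdot 2^m=4^m$. If $i_1=i_2$ then the two choices of $H_{i_j}$ must differ, which is only possible at component $0$ with $\gamma_0\geq 3$ (so $m=1$); the intersection of two distinct index-$2$ subgroups of $G_0$ has index $4$, and again $[G:H^1\cap H^2]=4=4^m$. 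In either case $|H^1\cap H^2|=\phi(a)/4^m\geq 1$, and every class appearing in any $\mathcal{C}$ is disjoint from $H^1\cap H^2$ since it avoids either $H^1$ or $H^2$.

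Applying Lemma \ref{lem3.2}(i) with $\mathcal{B}=H^1\cap H^2$ (so $w=\phi(a)/4^m\geq 1$) and the chosen $\mathcal{C}$ (so $|\mathcal{C}|\leq 2^m-2$) yields
$$|\mathcal{A}(H^1\cap H^2,\mathcal{C})(N)|\ll_a\frac{N(\log\log N)^{|\mathcal{C}|}}{(\log N)^{1-1/4^m}}\ll_a\frac{N(\log\log N)^{2^m-2}}{(\log N)^{1-1/4^m}},$$
and summing the $O_a(1)$ such contributions gives the claimed bound. The only nontrivial ingredient is the exact index computation $[G:H^1\cap H^2]=4^m$; once this is in place the remainder is bookkeeping together with Lemma \ref{lem3.2}.
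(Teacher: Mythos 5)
Your proof is correct and follows essentially the same route as the paper: reduce to $H^1\neq H^2$, show $[G:H^1\cap H^2]=4^m$, embed the intersection into sets of the form $\mathcal{A}(H^1\cap H^2,\mathcal{C})$ with $|\mathcal{C}|\le 2^m-2$, and apply Lemma \ref{lem3.2}. In fact you supply two details the paper leaves implicit: the case analysis proving the index identity, and the union over sub-multisets $\mathcal{C}$ of $\mathcal{C}^1\cup\mathcal{C}^2$, which is slightly more careful than the paper's single containment into $\mathcal{A}(H^1\cap H^2,\mathcal{C}^1\cup\mathcal{C}^2)$ (a prime lying outside both $H^1$ and $H^2$ is counted twice there) and yields the same bound.
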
 
\begin{proof}
If $H^1$ and $H^2$ are the same, then $\mathcal{C}^1$ and $\mathcal{C}^2$ differs in at least one element. Hence the intersection is empty. So without loss of generality, we can assume $H^1$ and $H^2$ are not the same. Then $H^1\cap H^2$ is a subgroup of $G$ with index $4^m$. Also notice the relation $$\mathcal{A}(H^1,\mathcal{C}^1)\cap\mathcal{A}(H^2,\mathcal{C}^2)\subseteq\mathcal{A}(H^1\cap H^2,\mathcal{C}^1\cup\mathcal{C}^2)$$ 
where $\mathcal{C}^1\cup\mathcal{C}^2$ is the union of the sequences $\mathcal{C}^1$ and $\mathcal{C}^2$ and hence is of length $2^m-2$. Then the desired conclusion follows from Lemma \ref{lem3.2}.
\end{proof}

Now set 
$$\mathcal{U}=\cup_{H}\cup_{\mathcal{C}}\mathcal{A}(H,\mathcal{C}),$$
where the union runs through all $H\in\mathfrak{H}$ and the corresponding sequences $\mathcal{C}$ for $H$ as defined above. We know that $\mathcal{U}\subseteq\ES_a$ from Lemma \ref{lem2.9}. 
\begin{lem}\label{lem4.3}
We have
\[
\es_a(N)-|\mathcal{U}(N)|\ll_a 
\begin{cases}
\frac{N(\log\log N)^{2^{m-1}-2}}{(\log N)^{1-1/2^m}},&\text{if $m\ge2$},\\
\frac{N(\log\log N)^{{\phi(a)}^{\phi(a)}}}{(\log N)^{1-1/2^m+1/\phi(a)}}, & \text{if $m=1$}.
\end{cases}
\]
\end{lem}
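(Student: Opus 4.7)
The plan is to partition $\ES_a \setminus \mathcal{U}$ according to the coarse structure of the prime factorisation of $n$ and bound each piece via Lemma \ref{lem3.2}. Fix $n \in \ES_a \setminus \mathcal{U}$ with $n \leq N$, let $\mathcal{P}$ denote the multiset of prime divisors of $n$, and split on whether $\overline{\mathcal{P}}$ covers some $H \in \mathfrak{H}$.

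Case A ($\overline{\mathcal{P}} \supseteq H$ for some $H \in \mathfrak{H}$): The number $t_H$ of prime factors of $n$ (with multiplicity) outside $H$ satisfies $t_H \leq 2^{m-1}-1$ by Lemma \ref{lem2.9}(i). If $t_H = 2^{m-1}-1$, then by Lemma \ref{lem2.9}(ii) the bad primes must fulfil the congruence condition there (vacuously so when $m=1$), which places $n$ in $\mathcal{A}(H,\mathcal{C}) \subseteq \mathcal{U}$ and contradicts $n \notin \mathcal{U}$. Hence $m \geq 2$ and $t_H \leq 2^{m-1}-2$, forcing $n \in \mathcal{A}(H,\mathcal{C}')$ for some sequence $\mathcal{C}' \subset G$ of length at most $2^{m-1}-2$. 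Summing Lemma \ref{lem3.2} over the finitely many choices of $H$ and $\mathcal{C}'$ gives the bound $\ll_a N(\log\log N)^{2^{m-1}-2}/(\log N)^{1-1/2^m}$ for this case.

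Case B ($\overline{\mathcal{P}} \not\supseteq H$ for every $H \in \mathfrak{H}$): Define $H^* = \{h \in G : \text{class } h \text{ contains at least } \phi(a) \text{ members of } \mathcal{P}\}$. The inclusion $H^* \subseteq \overline{\mathcal{P}}$, together with the case hypothesis, forces $H^* \notin \mathfrak{H}$, and Lemma \ref{lem2.8} then yields $|H^*| \leq d-1$. Factor $n = n_1 n_2$ where $n_1$ collects the prime factors lying in classes of $H^*$ and $n_2$ the rest; by the defining property of $H^*$ one has $\Omega(n_2) < \phi(a)^2$. For each fixed $n_2$, Lemma \ref{lem3.2}(i) applied with $\mathcal{B}=H^*$ and $\mathcal{C}=\emptyset$ bounds the count of admissible $n_1 \leq N/n_2$ by $\ll_a (N/n_2)/(\log(N/n_2))^{1-(d-1)/\phi(a)}$. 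Summing over $n_2$ and invoking Mertens' theorem ($\sum_p 1/p \ll \log\log N$) for each of the at most $\phi(a)^2$ rare prime slots yields a contribution $\ll_a N(\log\log N)^{\phi(a)^{\phi(a)}}/(\log N)^{1-1/2^m+1/\phi(a)}$, where the crude exponent $\phi(a)^{\phi(a)}$ absorbs $\phi(a)^2$.

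Combining the two cases delivers the lemma: for $m \geq 2$ the extra $(\log N)^{1/\phi(a)}$ savings in Case B make it dominated by Case A; for $m=1$ Case A is vacuous and only Case B contributes. The main technical obstacle will be the tail $n_2 > N^{1/2}$ in Case B, where the asymptotic in Lemma \ref{lem3.2} for $|\mathcal{A}(H^*,\emptyset)(N/n_2)|$ degrades; this range must be handled separately, for instance by bounding $n_1 \leq N^{1/2}$ trivially and exploiting $\Omega(n_2) < \phi(a)^2$ to count such $n_2$ directly via the Hardy--Ramanujan estimate for integers with a bounded number of prime factors.
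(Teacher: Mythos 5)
Your structural reduction is sound and, in outline, matches the paper: Lemma \ref{lem2.9} disposes of the $n$ whose prime-factor image covers some $H\in\mathfrak{H}$ (your Case A; this is handled exactly as the paper handles $\mathcal{N}(d)$, and your conclusion that such $n\notin\mathcal{U}$ have at most $2^{m-1}-2$ primes outside $H$, hence lie in one of $O_a(1)$ sets $\mathcal{A}(H,\mathcal{C}')$, is correct and gives the $m\ge2$ term), while Lemma \ref{lem2.8} forces $|H^*|\le d-1$ in the remaining case. Where you genuinely diverge is the counting in Case B. The paper does not factor $n=n_1n_2$ at all: since the primes of $n$ outside the classes of $H^*$ number at most $\phi(a)(\phi(a)-1)$, the integer $n$ itself lies in $\mathcal{A}(\mathcal{B},\mathcal{C})$ with $\mathcal{B}=H^*$, $|\mathcal{B}|\le d-1$, and $\mathcal{C}$ the residue sequence of those rare primes; there are only $O_a(1)$ such pairs $(\mathcal{B},\mathcal{C})$, so Lemma \ref{lem3.2} applied once at $N$ gives the exponent $1-|\mathcal{B}|/\phi(a)\ge 1-1/2^m+1/\phi(a)$ with no uniformity or tail issues. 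Your detour through fixing $n_2$, Mertens, Hardy--Ramanujan and a range split can be made to work, but it manufactures precisely the difficulties you then have to flag, and your tail sketch is the one real soft spot: if ``bounding $n_1\le N^{1/2}$ trivially'' means summing $1/n_1$ over all $n_1\le N^{1/2}$, that sum is $\asymp\log N$ and the tail contribution becomes $\gg N(\log\log N)^{O_a(1)}$, destroying the saving. In the tail you must retain the constraint that every prime factor of $n_1$ lies in the at most $d-1$ classes of $H^*$, so that Mertens for primes in arithmetic progressions gives $\sum 1/n_1\ll_a(\log N)^{(d-1)/\phi(a)}$, which together with the Hardy--Ramanujan bound for $n_2$ recovers the exponent $1-1/2^m+1/\phi(a)$; similarly, in the main range the implied constants from Lemma \ref{lem3.2} are admissible uniformly in $n_2$ only because the finitely many sets $\mathcal{A}(H^*,\emptyset)$ depend on $a$ alone, and the degenerate case $H^*=\emptyset$ (where part (i) of Lemma \ref{lem3.2} does not apply) needs part (ii) or a direct count of integers with $\Omega(n)<\phi(a)^2$. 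With those repairs your argument is correct, though noticeably longer than the paper's one-line appeal to Lemma \ref{lem3.2}.
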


\begin{proof}
We let $\mathcal{W}(n)$ be the set of residue classes modulo $a$ in which there are at least $\phi(a)$ (counted with multiplicity) prime factors of $n$. By Lemma \ref{lem2.8} we know that
\begin{enumerate}[(i)]
\item if $|\mathcal{W}(n)|\ge d+1$, then $n\notin \ES_a$;
\item if $|\mathcal{W}(n)|=d$, then $n\notin \ES_a$ unless $\mathcal{W}(n)=H$ for some subgroup $H$ of $G$ as above.
\end{enumerate}
Let
$$\mathcal{N}(i)=\{n\in\ES_a:|\mathcal{W}(n)|=i\}$$
for $0\le i\le \phi(a)$.
From the above discussion we know $\mathcal{N}(i)$ is empty as long as $i>d$. Hence
$$\ES_a=\bigcup_{i=0}^d\mathcal{N}(i).$$
Firstly observe that by Lemma \ref{lem3.2} we have
$$\left|\left(\bigcup_{i=0}^{d-1}\mathcal{N}(i)\right)(N)\right|\ll_a \frac{N(\log\log N)^{{\phi(a)}^{\phi(a)}}}{(\log N)^{1-1/2^m+1/\phi(a)}}.$$
Now if $m=1$, then we have $\mathcal{N}(d)=\mathcal{U}$ by part (i) of Lemma \ref{lem2.9}, and if $m\ge2$, then we have by Lemma \ref{lem2.9} and Lemma \ref{lem3.2} that
$$|(\mathcal{N}(d))(N)|-|\mathcal{U}(N)|\ll_a \frac{N(\log\log N)^{2^{m-1}-2}}{(\log N)^{1-1/2^m}}.$$ 
Therefore Lemma \ref{lem4.3} follows by putting the above conclusions together.
\end{proof}
Here we bound the error term rather crudely, following from Lemma \ref{lem2.8}. Actually it can be refined substantially by a generalisation of Lemma \ref{lem2.4}, which is, however, not pertinent to the purpose of the current paper. 

Now Theorem \ref{t1.1} follows from Lemma \ref{lem2.1}, Lemma \ref{lem4.1}, Lemma \ref{lem4.2} and Lemma \ref{lem4.3}. It should be noted that the leading constant $C(a)$ appearing in Theorem \ref{t1.1}, can be traced back explicitly in our arguments, but is inevitably messy, would require some non-trivial expenditure of effort and would not give any further insights into our problem.


\begin{thebibliography}{10}

\bibitem [CDFHP]{CDFHP} Croot, E.; Dobbs, D.; Friedlander, J.; Hetzel, A.;      Pappalardi, F., \emph{Binary Egyptian fractions}, J. Number Theory 84(2000), no. 1, 63-79.

\bibitem [De]{De}
Delange, H., \emph{G\'{e}n\'{e}ralisation du Th\'{e}or\`{e}me de Ikehara}, Ann. Sci. Ecole Norm. Sup. (3) 71, (1954). 213-242. 

\bibitem [HR]{HR}
Halberstam, H. and Roth, K. F., \emph{Sequences}, second edition, Springer-Verlag, New York-Berlin, 1983.

\bibitem [HS]{HS}
Hofmeister, G. and Stoll P., \emph{Note on Egyptian fractions}, J. Reine Angew. Math. 362 (1985), 141-145.

\bibitem [HV1]{HV1}
Huang, J.-J. and Vaughan, R. C., \emph{Mean value theorems for binary Egyptian fractions}, J. Number Theory 131 (2011), no. 11, 1641-1656.

\bibitem [HV2]{HV2}
Huang, J.-J. and Vaughan, R. C., \emph{Mean value theorems for binary Egyptian fractions II}, submitted, arXiv:1109.2274.

\bibitem [Te]{Te}
Tenenbaum G., \emph{Introduction to analytic and probabilistic number theory}, Cambridge University Press, 1995. 



\end{thebibliography}
\end{document}